\newtheorem{theo}{Theorem}[section]
\newtheorem{coro}[theo]{Corollary}
\newtheorem{lemm}[theo]{Lemma}
\newtheorem{prop}[theo]{Proposition}
\newtheorem{defi}[theo]{Definition}
\newtheorem{rema}[theo]{Remark}
\newtheorem{exam}[theo]{Example}
\newenvironment{proof}{\noindent \textbf{{Proof.}} \sf}
\def\qed{\hfill $\diamond$ \bigskip}
\def\lim{\mathop{\rm lim}\nolimits}
\def\HH{\mathsf H}
\def\Ext{\mathsf{Ext}}
\def\Hom{\mathsf{Hom}}
\def\Ker{\mathsf{Ker}}
\def\ker{\mathop{\rm Ker}\nolimits}
\def\coker{\mathsf{Coker}}
\def\Im{\mathsf{Im}}
\def\dim{\mathsf{dim}}
\begin{document}
\sf

\title{The first Hochschild (co)homology when adding arrows to a bound quiver algebra}
\author{Claude Cibils,  Marcelo Lanzilotta, Eduardo N. Marcos,\\ Sibylle Schroll and Andrea Solotar
\thanks{\footnotesize This work has been supported by the projects  UBACYT 20020130100533BA, PIP-CONICET 11220150100483CO, USP-COFECUB.
The third mentioned author was supported by the thematic project of FAPESP 2014/09310-5 and acknowledges support from the "Brazilian-French Network in Mathematics" and from a research scholarship from CNPq, Brazil. This work has been supported through the EPSRC  Early Career Fellowship EP/P016294/1 for the fourth author. The fifth mentioned author is a research member of CONICET (Argentina) and a Senior Associate at ICTP.}}

\date{}
\maketitle
\begin{abstract}
We provide a formula for the change of the dimension of the first Hoch\-schild cohomology vector space of bound quiver algebras when adding new arrows. For this purpose we show that there exists a short exact sequence which relates the first cohomology vector spaces of the algebras to the first relative cohomology. Moreover, we show that the first Hochschild homologies are isomorphic when adding new arrows.

\end{abstract}

\noindent 2010 MSC: 18G25, 16E40, 16E30, 18G15

\noindent \textbf{Keywords:} Hochschild, cohomology, homology, relative, quiver, arrow

\normalsize

\section{\sf Introduction}

The first Hochschild cohomology vector space $\HH\HH^1(B)$ of an algebra $B$ over a field $k$  is isomorphic to  the quotient of the $k$-derivations of the algebra by the inner ones.  It has a Lie algebra structure providing information on the algebra and it is an invariant of its derived equivalence class, see \cite{KELLER}.  As noticed for instance by M. Gerstenhaber in \cite[p.66]{GERSTENHABER1964}, derivations can be considered as infinitesimal automorphisms of $B$. They  are related to the deformation theory of $B$. R.-O. Buchweitz and S. Liu obtained in \cite{BUCHWEITZLIU} that if  $k$ is algebraically closed and $B$ is a finite dimensional algebra of finite representation type with $\Lambda$ its Auslander algebra, the following four statements are equivalent: $\HH\HH^1(B)=0$, $\HH\HH^1(\Lambda)=0$, $B$ is simply connected, and $\Lambda$ is strongly simply connected.

For a bound quiver algebra  $B=kQ/I$, given  some hypotheses on $I$, J. A. de la Pe\~{n}a and M. Saor\'\i n in \cite{DELAPENASAORIN} obtained formulas computing the dimension of $\HH\HH^1(B)$, see also \cite{CIBILS2000Cordoba,CIBILSSAORIN,CIBILSREDONDOSAORIN,HAPPEL}. For several families of algebras, results concerning the first cohomology vector space are given for instance in \cite{ASSEMBUSTAMANTEIGUSASCHIFFLER,ASSEMREDONDO,ASSEMREDONDOSCHIFFLER,LAUNOISLENAGAN,LEMEURa,LEMEURb,STRAMETZ,TAILLEFER}. In case the algebra $B$ is split, a canonical decomposition  of $\HH\HH^1(B)$ into four direct summands is obtained in \cite{CMRS2002}.

In this paper we study the change in both the Hochschild cohomology and Hochschild homology of an algebra given by quiver and relations, when we add arrows to the quiver. More precisely,  we consider a bound quiver algebra $B=kQ/I$ and a finite set of new arrows $F$ that we add to $Q$. The new quiver is denoted by $Q_F$ and $B_F$ denotes the corresponding algebra, that  is  $B_F=kQ_F/\langle I\rangle_{kQ_F}$ where the denominator is the two sided ideal generated by $I$ in $kQ_F$. A relative path is a sequence $(a_n,\dots ,a_1)$ of new arrows such that $s(a_{i+1})Bt(a_i)\neq 0$ for $i=1,\dots n-1$, where $s,t:F\to Q_0$ are the source and the target maps of the new arrows. We observe that $B_F$ is  finite dimensional if and only if there are no relative cycles as defined in Section \ref{relative path}. In this case, namely if $B_F$ is a bound quiver algebra, we obtain a formula for computing $\Delta = \dim_k\HH\HH^1(B_F)-\dim_k\HH\HH^1(B)$, see Theorem \ref{formulaH^1}. Next we specialize the formula to the case
where adding only one arrow, see Corollary \ref{onearrow}.

Note that in a first step, instead of adding new arrows, it is possible to simply add new vertices. New arrows can then also be attached to these new vertices. Indeed, adding new vertices does not change Hochschild cohomology except in degree zero, that is the dimension of the center increases by the number of the new vertices.

The procedure of adding arrows without changing the relations (or the  reverse procedure, namely deleting arrows which are not involved in a minimal set of generators of the relations) has been also recently considered in \cite{GREENPSAROUDAKISSOLBERG} in relation to the finitistic dimension. This procedure is also used in  \cite{CIBILSLANZILOTTAMARCOSSOLOTAR} in order to compute the change of dimensions in Hochschild cohomology and homology in degrees greater or equal to two. The formulas provided here for the first Hochschild cohomology vector space are however more intricate.

A main tool for our work is relative cohomology as defined by G. Hochschild in \cite{HOCHSCHILD1956} and used for instance in \cite{AUSLANDERSOLBERG1,SOLBERG} in the context of representation theory. We prove the existence of a short exact sequence which relates the relative and the usual  cohomologies in degree one,  see Proposition~\ref{beginning exact}.  Our proof uses the fact that  $B_F$ is  a tensor algebra of a projective $B$-bimodule over $B$. Indeed, in \cite{CIBILSLANZILOTTAMARCOSSOLOTAR} it is proven that a tensor algebra has a relative projective resolution of length one,  which enables us to compute in the present paper the dimensions of the extremities of the short exact sequence.   We also note that the relative projective resolution of length one, together with the Jacobi-Zariski long exact sequence obtained by A. Kaygun in \cite{KAYGUN,KAYGUNe} provide another proof of the existence of the short exact sequence in Proposition~\ref{beginning exact}.

As a by-product of our formula, we obtain a new computation of the dimension of $\HH\HH^1(kQ)$ for a quiver $Q$ without cycles.

In the last section we dualize the short exact sequence, and we show that the first Hochschild homology vector space does  not change when adding new arrows without relative cycles.

 We use the symbol $\simeq$ when there exists an isomorphism, while $=$  means either equality or canonical isomorphism.

\section{\sf Adding new arrows}\label{Adding new arrows}

A  quiver $Q$ consists of two  sets, the set of vertices $Q_0$, the set of arrows $Q_1$, and two maps $s,t: Q_1\to Q_0$ called respectively the source and the target maps. In this paper we will only consider finite quivers, that is $Q_0$ and $Q_1$ are finite. A \emph{path} of length $n>0$ is a sequence of arrows $ \gamma=(a_n,\dots, a_1)$ such that $s(a_{i+1})=t(a_i)$ for $i=1,\dots,n-1$, and we put $t(\gamma)=t(a_n)$ and $s(\gamma)=s(a_1)$. Let $Q_n$ be the set of paths of length $n$. The vertices are the paths of length $0$, each one is its own source and target. Let $Q_*=\bigcup_{n\geq 0}Q_n$. The path algebra $kQ$ is the vector space with basis $Q_*$,  the product  of two paths $\beta$ and $\alpha$ is their concatenation $\beta\alpha$ if $t(\alpha)=s(\beta)$, and $0$ otherwise. The vertices are a complete set of orthogonal idempotents.

If $k$ is algebraically closed, by a result of P. Gabriel in \cite{GABRIEL1973,GABRIEL1980}, see also for instance
 \cite[Theorem 3.7]{ASSEMSIMSONSKOWRONSKY} or \cite{SCHIFFLER}, any finite  dimensional $k$-algebra $B$ is Morita equivalent to an algebra $kQ/I$, where $Q$ is determined by $B$ and $I$ is an admissible two-sided ideal of $kQ$, that is there exists $n\geq 2$ such that $\langle Q_n\rangle \subset I \subset \langle Q_2 \rangle$. Such an algebra is finite dimensional and is called a \emph{bound quiver algebra}.

Next we introduce some definitions and notations.

  \label{B_F}
  Let $Q$ be a quiver. A \emph{set of new arrows }is a finite set $F$ with two map $s,t: F\to Q_0$. The quiver $Q_F$ is given by $(Q_F)_0=Q_0$ and $(Q_F)_1= Q_1\sqcup F$, while $s$ and $t$ are inferred from the corresponding maps of $Q_1$ and $F$.  Let now $B=kQ/I$ be a bound quiver algebra and let $\langle I\rangle_{kQ_F}$ be the two-sided ideal generated by $I$ in $kQ_F$. We denote $B_F$ the algebra $kQ_F/\langle I\rangle_{kQ_F}$.

  \label{relative path} A \emph{relative path} of length $n>0$ is a sequence of new arrows $\gamma =(a_n,\dots, a_1)$ such that $s(a_{i+1})Bt(a_i)\neq 0 \mbox{ for } i=1,\dots,n-1$. We put $s(\gamma)=s(a_1)$ and $t(\gamma)=t(a_n)$. The \emph{dimension} of $\gamma$ is $\dim_k \gamma= \prod_{i=1}^{n-1} \dim_k s(a_{i+1})Bt(a_i).$

   The set of relative paths of length $n$ is denoted by $R_n$. We set $R_*=\bigcup_{n>0}R_n$.

 A relative path $\gamma$ is called a \emph{relative cycle} if  $s(\gamma)Bt(\gamma)\neq 0$. Its \emph{cyclic dimension}  is $\mathsf{cdim}_k\gamma = \dim_k(s(\gamma)Bt(\gamma))\dim_k \gamma$. Note that if $a\in F$  is such that $s(a)Bt(a)\neq 0$, then $a$ is a relative cycle. We also call $a$ a relative loop.

   For instance let $$Q=f\cdot \rightarrow \cdot e$$  and let $a$ be a new arrow in the reverse direction. We have $$Q_F= f\cdot \leftrightarrows \cdot e$$ and $a$ is a relative loop.

  To $F$ we associate  the projective $B$-bimodule
 $$N=\bigoplus_{a\in F} Bt(a)\otimes s(a)B.$$

  \normalsize

The following has been proved in \cite{CIBILSLANZILOTTAMARCOSSOLOTAR}.
\begin{prop}\label{isomorphic}
Let $B=kQ/I$ be a bound quiver algebra and let $F$ be a set of new arrows. The algebras $B_F$ and
$$T_B(N)=B\ \oplus\ N\ \oplus\ N\otimes_B N\ \oplus \ N\otimes_B N\otimes_B N\ \oplus \ \cdots$$
are canonically isomorphic.
\end{prop}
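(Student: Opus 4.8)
The plan is to exhibit an explicit $k$-algebra homomorphism $\varphi\colon T_B(N)\to B_F$ and check it is bijective by comparing it with the natural basis of $B_F$ coming from paths in $Q_F$. The starting point is the universal property of the tensor algebra: to define an algebra map $T_B(N)\to B_F$ it suffices to give a $k$-algebra map $B\to B_F$ together with a $B$-bimodule map $N\to B_F$ (where $B_F$ is seen as a $B$-bimodule via the chosen algebra map). For the algebra map $B\to B_F$ I would take the one induced by the inclusion $kQ\hookrightarrow kQ_F$, which is well defined since $I\subseteq\langle I\rangle_{kQ_F}$. For the bimodule map, recall $N=\bigoplus_{a\in F} Bt(a)\otimes s(a)B$; on the summand indexed by $a$ I would send the elementary tensor $p\otimes q$, with $p$ a path ending at $t(a)$ and $q$ a path starting at $s(a)$, to the class of the concatenation $q\,a\,p$ in $B_F$. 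This is clearly $B$-bilinear, so it extends uniquely to an algebra homomorphism $\varphi\colon T_B(N)\to B_F$, sending an elementary tensor in $N^{\otimes_B n}$ to (the class of) an alternating concatenation of old paths and new arrows.

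Next I would argue surjectivity: the algebra $B_F=kQ_F/\langle I\rangle_{kQ_F}$ is generated by the images of the vertices, the old arrows, and the new arrows, and all of these are visibly in the image of $\varphi$ (the new arrow $a$ is the image of $t(a)\otimes s(a)\in N$), so $\varphi$ is onto. For injectivity I would set up a basis-counting / normal-form argument. Every path in $Q_F$ factors uniquely as an alternating product of (possibly trivial) paths in $Q$ and single new arrows from $F$; grouping a maximal $Q$-segment between consecutive new arrows, a path in $Q_F$ not lying in $\langle I\rangle_{kQ_F}$ corresponds exactly to a tuple $(q_n,a_n,q_{n-1},\dots,a_1,q_0)$ with $a_i\in F$ and each $q_i$ a path in $Q$ whose class in $B$ is nonzero and whose source/target match up with the adjacent arrows. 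Such tuples biject with a basis of $B\cdot t(a_1)\otimes_k\cdots$, i.e.\ with the obvious monomial basis of the summand $N^{\otimes_B n}$ of $T_B(N)$ described via $N=\bigoplus_{a}Bt(a)\otimes s(a)B$; and $\varphi$ carries this monomial basis of $T_B(N)$ bijectively onto the image of this path basis in $B_F$. Since $\langle I\rangle_{kQ_F}$ is spanned precisely by the paths of $Q_F$ that contain an old subpath lying in $I$, passing to quotients shows $\varphi$ is a bijection.

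The main obstacle is the injectivity step, specifically making precise that $\langle I\rangle_{kQ_F}$, as a subspace of $kQ_F$, is spanned by monomials (paths of $Q_F$) having a factor in $I$, and that this matches the relations imposed inside each tensor power $N^{\otimes_B n}$ when one writes $N$ in terms of $B$ rather than $kQ/I$. Concretely one must check that the $B$-bimodule structure used in forming $N\otimes_B N$ collapses exactly the ambiguity in choosing representatives $q_i\in kQ$ for the middle segments, i.e.\ that the relation $p\otimes q \sim p'\otimes q'$ in $Bt(a)\otimes_k s(a)B \,\otimes_B\, Bt(a')\otimes_k s(a')B$ is governed by multiplication in $B$; this is where Proposition~\ref{isomorphic} genuinely uses that the denominator is $\langle I\rangle_{kQ_F}$ and not some larger ideal. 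Everything else is a routine verification of bilinearity and of compatibility of $\varphi$ with the two algebra structures, and one can also phrase the whole argument more slickly by noting that $kQ_F = T_{kQ_0}(kQ_1\oplus F)$ and using associativity of the tensor algebra construction over the separable subalgebra $kQ_0$, reducing the claim to the standard identity $T_{kQ_0}(V\oplus W)\simeq T_{T_{kQ_0}(V)}\!\big(T_{kQ_0}(V)\otimes_{kQ_0}W\otimes_{kQ_0}T_{kQ_0}(V)\big)$ and then dividing by the ideal generated by $I$.
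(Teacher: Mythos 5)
The paper itself does not prove Proposition \ref{isomorphic}: it is quoted from \cite{CIBILSLANZILOTTAMARCOSSOLOTAR}, so there is no in-text argument to compare against. Your overall plan --- build $\varphi\colon T_B(N)\to B_F$ from the universal property of the tensor algebra (an algebra map $B\to B_F$ plus a $B$-bimodule map $N\to B_F$), get surjectivity because vertices, old arrows and new arrows generate $B_F$, then identify graded pieces --- has the right shape, and your closing ``slick'' remark, namely $kQ_F=T_{kQ_0}(kQ_1\oplus kF)$ together with transitivity of the tensor-algebra construction over $kQ_0$ and then reduction modulo $\langle I\rangle$, is in fact the cleanest way to finish. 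One bookkeeping slip: with the paper's conventions ($\beta\alpha$ means ``first $\alpha$, then $\beta$'', so $Bt(a)$ is spanned by classes of paths with source $t(a)$ and $s(a)B$ by those with target $s(a)$), the elementary tensor $p\otimes q$ should be sent to $p\,a\,q$; with the source/target conditions you state, the concatenation $q\,a\,p$ is not composable.

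The genuine gap is in your injectivity step. The assertion that $\langle I\rangle_{kQ_F}$ is spanned by paths of $Q_F$ containing a factor in $I$ is false unless $I$ is monomial: an admissible ideal is generated by linear combinations of parallel paths (e.g.\ commutativity relations), and no individual path need lie in $I$. Similarly, tuples $(q_n,a_n,\dots,a_1,q_0)$ with each $q_i$ a path whose class in $B$ is nonzero do not biject with a basis of $N^{\otimes_B n}$, since distinct paths may have equal or linearly dependent classes; so the basis-matching count does not go through as stated. The repair is exactly the graded argument you only hint at: write $kQ_F\cong T_{kQ}(M)$ with $M=kQ\otimes_{kQ_0}kF\otimes_{kQ_0}kQ$, observe that $\langle I\rangle_{kQ_F}$ is homogeneous for this grading, its degree-$n$ part being the sum of the subspaces of $kQ\otimes_{kQ_0}kF\otimes_{kQ_0}\cdots\otimes_{kQ_0}kQ$ in which one $kQ$-factor is replaced by $I$, and then invoke the general isomorphism $T_R(M)/\langle I\rangle_{T_R(M)}\cong T_{R/I}\bigl((R/I)\otimes_R M\otimes_R(R/I)\bigr)$, proved by the universal property on both sides, with $R=kQ$. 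Since $B\otimes_{kQ}M\otimes_{kQ}B\cong\bigoplus_{a\in F}Bt(a)\otimes s(a)B=N$, this gives the canonical isomorphism with no choice of path bases and no monomial assumption; with that substitution your plan closes.
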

Through the isomorphism of Proposition \ref{isomorphic}, a new arrow $a$ corresponds to $t(a)\otimes s(a)\in N$. Moreover, for $n>0$ we have that
$$N^{\otimes_Bn}\simeq \bigoplus_{\gamma \in R_n} \dim_k\gamma\ (Bt(\gamma)\otimes s(\gamma)B)$$
 corresponds to
$$\bigoplus_{(a_n,\dots a_1)\in R_n} Ba_nB\dots a_1B.$$
We infer the following
\begin{coro}
The algebra $B_F$ is of finite dimension if and only if there are no relative cycles.
\end{coro}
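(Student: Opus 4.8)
The plan is to use the canonical isomorphism $B_F \simeq T_B(N)$ from Proposition~\ref{isomorphic}, together with the identification of $N^{\otimes_B n}$ with $\bigoplus_{\gamma \in R_n} \dim_k\gamma\,(Bt(\gamma)\otimes s(\gamma)B)$ spelled out just before the statement, and reduce the finiteness of $\dim_k B_F$ to a purely combinatorial condition on relative paths. Since $B$ is finite dimensional, each summand $Bt(\gamma)\otimes s(\gamma)B$ is finite dimensional, and $\dim_k \gamma < \infty$; hence $\dim_k B_F = \dim_k B + \sum_{n>0}\sum_{\gamma\in R_n}\dim_k\gamma\cdot\dim_k\big(Bt(\gamma)\otimes s(\gamma)B\big)$ is finite if and only if this sum has only finitely many nonzero terms, i.e. if and only if $R_n = \emptyset$ for all sufficiently large $n$. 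So the crux is to show that $R_n\neq\emptyset$ for infinitely many $n$ precisely when there is a relative cycle.

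First I would prove the easy direction: if $\gamma=(a_m,\dots,a_1)$ is a relative cycle, then $s(\gamma)Bt(\gamma)\neq 0$, and I can concatenate $\gamma$ with itself arbitrarily many times. The condition $s(a_{i+1})Bt(a_i)\neq 0$ is preserved in the interior of each copy, and at the junction between consecutive copies it becomes exactly $s(a_1)Bt(a_m)=s(\gamma)Bt(\gamma)\neq 0$; so $\gamma^{(j)}:=(\underbrace{a_m,\dots,a_1,\dots,a_m,\dots,a_1}_{j\ \text{copies}})$ is a relative path of length $jm$ for every $j\geq 1$. Hence $R_n\neq\emptyset$ for all $n\in m\mathbb{Z}_{>0}$, so $B_F$ is infinite dimensional.

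For the converse, suppose $B_F$ is infinite dimensional, so by the above there are relative paths of arbitrarily large length. Since $F$ is finite, there are only finitely many arrows available, so a sufficiently long relative path $\gamma=(a_n,\dots,a_1)$ must repeat an arrow: there are indices $i<j$ with $a_i=a_j=:a$. I claim the truncated sequence $(a_{j-1},\dots,a_i)$, which starts and ends with an arrow having source $s(a)$ and target $t(a)$ respectively — more precisely it is the relative path $(a_{j-1}, a_{j-2},\dots, a_{i+1}, a_i)$ with $s(a_i)=s(a)$ and $t(a_{j-1})=t(a_{j-2}\cdots)$; one must be slightly careful here. The cleanest formulation: since $a_i=a_j$, consider $\delta=(a_{j-1},\dots,a_i)$. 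Its interior conditions hold because they are a subset of those for $\gamma$. Its source is $s(a_i)$ and its target is $t(a_{j-1})=s(a_j)=s(a_i)$ (using $s(a_j)Bt(a_{j-1})\neq 0$ forces $t(a_{j-1})=s(a_j)$... wait, no — $s(a_j)Bt(a_{j-1})\neq 0$ only needs these vertices to be connected in $B$, not equal). Here is the main obstacle I anticipate: a relative path need not close up ``on the nose'' at vertex level, so repetition of an arrow does not immediately give a relative cycle. The resolution is that from $a_i=a_j$ we get that $s(a_i)=s(a_j)$ and $t(a_i)=t(a_j)$, and the relative path $\delta=(a_j, a_{j-1},\dots, a_{i+1})$ (shift by one) satisfies $s(\delta)=s(a_{i+1})$ and $t(\delta)=t(a_j)=t(a_i)$; its closing condition $s(\delta)Bt(\delta)=s(a_{i+1})Bt(a_i)\neq 0$ is exactly one of the defining interior conditions of the original path $\gamma$. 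Hence $\delta$ is a relative cycle, completing the proof.

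I expect the main technical point to be precisely this vertex-level bookkeeping: choosing the correct cyclic rotation of the repeated segment so that the closing inequality $s(\delta)Bt(\delta)\neq 0$ coincides with an inequality already guaranteed by $\gamma$ being a relative path. Everything else — the dimension count, the pigeonhole on $F$, and the iteration of a relative cycle — is routine. \qed
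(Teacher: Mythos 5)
Your proof is correct and follows the same route the paper implicitly takes: the corollary is inferred from the tensor-algebra decomposition $B_F=T_B(N)$ of Proposition~\ref{isomorphic} together with $N^{\otimes_B n}\simeq\bigoplus_{\gamma\in R_n}\dim_k\gamma\ (Bt(\gamma)\otimes s(\gamma)B)$, so that finiteness of $\dim_k B_F$ amounts to $R_n=\emptyset$ for all large $n$. The details you supply — iterating a relative cycle to get arbitrarily long relative paths, and conversely extracting from a repeated arrow $a_i=a_j$ the rotated segment $(a_j,\dots,a_{i+1})$ whose closing condition $s(a_{i+1})Bt(a_i)\neq 0$ is already one of the defining conditions of the long relative path — are precisely the bookkeeping the paper leaves to the reader, and you handle it correctly.
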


\section{\sf Short exact sequence}\label{Short exact sequence}

 In this section we establish a short exact sequence which relates the ordinary first Hochschild cohomology with the relative one, when adding new arrows to the quiver of a bound quiver algebra.

Let $B\subset A$ be an inclusion of $k$-algebras, and let $X$ be an $A$-bimodule. The relative Hochschild cohomology vector spaces for $n\geq 0$ -- see \cite{HOCHSCHILD1956} -- are
$$\HH^n(A|B, X)=\Ext_{A\otimes A^{\mathsf{op}}|B\otimes  B^{\mathsf{op}}}^n(A,X),$$
where the latter are the relative $\Ext$ groups for the inclusion of algebras $$B\otimes B^{\mathsf{op}}\subset A\otimes A^{\mathsf{op}}.$$

\begin{rema}   We have that
$$\Ext_{A\otimes A^{\mathsf{op}}|B\otimes B^{\mathsf{op}}}^n(A,X)= \Ext_{A\otimes A^{\mathsf{op}}|B\otimes A^{\mathsf{op}}}^n(A,X).$$
Indeed the relative bar resolution of $A$ by $B\otimes B^{\mathsf{op}}$-relative projective modules
$$ \cdots \to A\otimes_B A\otimes_B A \otimes_B A \to A\otimes_B A\otimes_B A \to A\otimes_B A \to A$$
 admits a contraction of homotopy of $B\otimes A^{\mathsf{op}}$-modules. Since $B\otimes B^{\mathsf{op}}\subset B\otimes A^{\mathsf{op}}$, the modules involved are also $B\otimes A^{\mathsf{op}}$-relative projective. Hence the above is a $B\otimes A^{\mathsf{op}}$-relative projective resolution which can be used for computing the corresponding relative Ext.
\end{rema}

\begin{lemm}\label{derivation bimodule morphism}
Let $\varphi:A\to X$ be a derivation. The map $\varphi$ is a $B$-bimodule morphism if and only if $\left.\varphi\right|_B=0.$
\end{lemm}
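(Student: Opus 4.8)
The plan is to unwind the definition of a derivation together with the definition of the $B$-bimodule structure on $X$ and on $A$. Recall $\varphi: A \to X$ is a derivation means $\varphi(ab) = a\varphi(b) + \varphi(a)b$ for all $a,b \in A$. The $B$-bimodule structure on $A$ is by multiplication (restricted from $A$), and on $X$ it is the restriction of the $A$-bimodule structure. A $k$-linear map $\varphi$ is a $B$-bimodule morphism iff $\varphi(\beta a) = \beta \varphi(a)$ and $\varphi(a\beta) = \varphi(a)\beta$ for all $\beta \in B$, $a \in A$.

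For the implication $(\Leftarrow)$: assume $\varphi|_B = 0$. Then for $\beta \in B$ and $a \in A$, the derivation property gives $\varphi(\beta a) = \beta \varphi(a) + \varphi(\beta) a = \beta \varphi(a)$, since $\varphi(\beta) = 0$; similarly $\varphi(a\beta) = \varphi(a)\beta + a\varphi(\beta) = \varphi(a)\beta$. Hence $\varphi$ is a $B$-bimodule morphism. For the implication $(\Rightarrow)$: assume $\varphi$ is a $B$-bimodule morphism. It suffices to show $\varphi$ vanishes on a set of algebra generators of $B$, or more simply to observe that $B$ has an identity element which is a sum of orthogonal idempotents $e = \sum_{x \in Q_0} x$ (the sum of the vertices), so $1_B = 1_A = e$. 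Applying the derivation property to $1 = 1\cdot 1$ gives $\varphi(1) = 1\varphi(1) + \varphi(1)1 = 2\varphi(1)$, so $\varphi(1) = 0$; alternatively, for each vertex idempotent $x$ we get $\varphi(x) = \varphi(x^2) = x\varphi(x) + \varphi(x)x$. Then for any $\beta \in B$, write $\beta = 1_B\,\beta$ and use the bimodule property: $\varphi(\beta) = \varphi(1_B \beta) = 1_B \varphi(\beta)$, which alone is not enough, so instead use $\varphi(\beta) = \varphi(\beta \cdot 1_B) $ combined with $\beta = \beta 1_B$ and the derivation rule on the product $\beta\cdot 1_B$ versus the bimodule rule: $1_B\varphi(\beta) = \varphi(\beta) = \varphi(\beta\cdot 1_B) = \varphi(\beta)1_B + \beta\varphi(1_B) = \varphi(\beta) + \beta\varphi(1_B)$, forcing $\beta\varphi(1_B)=0$ for all $\beta$; taking $\beta = 1_B$ gives $\varphi(1_B)=0$. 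Finally, for arbitrary $\beta \in B$, since $\varphi$ restricted to $B$ is a $B$-bimodule map into $X$ landing via the inclusion $B \hookrightarrow A$, apply the derivation identity inside $A$: $\varphi(\beta) = \varphi(\beta\cdot 1_B) = \beta \varphi(1_B) + \varphi(\beta) 1_B$, which we just saw reduces to $\varphi(\beta) = \varphi(\beta)$; the cleaner argument is that for $\beta,\beta' \in B$ the bimodule property gives $\varphi(\beta\beta') = \beta\varphi(\beta')$, while the derivation property gives $\varphi(\beta\beta') = \beta\varphi(\beta') + \varphi(\beta)\beta'$, hence $\varphi(\beta)\beta' = 0$ for all $\beta,\beta' \in B$; choosing $\beta' = 1_B$ yields $\varphi(\beta) = 0$.

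I do not expect any real obstacle here: the whole content is the interplay between the two identities $\varphi(\beta\beta') = \beta\varphi(\beta') + \varphi(\beta)\beta'$ (derivation) and $\varphi(\beta\beta') = \beta\varphi(\beta')$ (left $B$-linearity), whose difference immediately yields $\varphi(\beta)\beta' = 0$ and then, by specializing $\beta'$ to the unit of $B$, $\varphi|_B = 0$. The only mild point worth stating explicitly is that $B$ is unital (with unit the sum of the vertex idempotents), so that $\beta = \beta \cdot 1_B$ is available; this is automatic for a bound quiver algebra. The converse direction is the short computation above using $\varphi(\beta) = 0$ to kill the extra term in the derivation identity.
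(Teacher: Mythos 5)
Your proof is correct and follows essentially the same route as the paper: for the converse you use $\varphi|_B=0$ to kill the extra term in the Leibniz rule, and for the forward direction your final ``cleaner argument'' (comparing $\varphi(\beta\beta')=\beta\varphi(\beta')$ with $\varphi(\beta\beta')=\beta\varphi(\beta')+\varphi(\beta)\beta'$ and specializing at the unit) is exactly the paper's computation with $b\cdot 1$. The intermediate detours in your forward direction are unnecessary but harmless.
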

\begin{proof}
If $\varphi$ is a $B$-bimodule morphism, then for any $b\in B$ we have $\varphi(b)=b\varphi(1)$. Moreover, since $\varphi$ is a derivation we have $\varphi(b)=b\varphi(1)+\varphi(b)1,$
hence $\varphi(b)=0.$ Conversely for all $a\in A$ we have
$\varphi(ba)=b\varphi(a)+\varphi(b)a=b\varphi(a).$
Similarly $\varphi(ab)=\varphi(a)b$. \qed
\end{proof}

A. Kaygun in \cite{KAYGUN, KAYGUNe} has obtained a Jacobi-Zariski long exact sequence whenever $A/B$ is projective as a $B$-bimodule, and $X$ is finite dimensional (actually the hypotheses are slightly more general). The next result shows that the beginning of the sequence is always exact for any inclusion of algebras $B\subset A$ and any $A$-bimodule $X$.

\begin{prop}\label{beginning exact}
The following sequence is exact
$$0\to \HH^1(A|B, X)\stackrel{\iota}{\to} \HH^1(A, X) \stackrel{\kappa}{\to} \HH^1(B, X)$$
\end{prop}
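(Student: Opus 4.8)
The plan is to make all three cohomology spaces fully explicit in terms of derivations and then run a short diagram chase. Recall first that $\HH^1(A,X)$ is the quotient of the space $\mathsf{Der}(A,X)$ of $k$-derivations $\varphi\colon A\to X$ by the inner derivations $\mathsf{ad}_x\colon a\mapsto ax-xa$, $x\in X$, and similarly for $\HH^1(B,X)$ with $X$ regarded as a $B$-bimodule. For the relative group I would use the relative bar resolution recalled in the Remark above: applying $\Hom_{A\otimes A^{\mathsf{op}}}(-,X)$ together with the adjunction $\Hom_{A\otimes A^{\mathsf{op}}}(A\otimes_B M\otimes_B A,X)\simeq\Hom_{B\otimes B^{\mathsf{op}}}(M,X)$, the complex computing $\HH^\bullet(A|B,X)$ has in degree $0$ the space $X^B=\{x\in X : bx=xb \mbox{ for all } b\in B\}$, in degree $1$ the space of $B$-bimodule morphisms $A\to X$, with differentials $d^0(x)=\mathsf{ad}_x$ and $(d^1 f)(a_1\otimes a_2)=a_1 f(a_2)-f(a_1 a_2)+f(a_1) a_2$. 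Hence the relative $1$-cocycles, that is $\ker d^1$, are the $B$-bimodule morphisms $A\to X$ which are derivations; by Lemma~\ref{derivation bimodule morphism} these are exactly the derivations $A\to X$ vanishing on $B$. The relative $1$-coboundaries are the $\mathsf{ad}_x$ with $x\in X^B$, each of which indeed vanishes on $B$. So $\HH^1(A|B,X)$ is the quotient of the derivations $A\to X$ vanishing on $B$ by the inner derivations $\mathsf{ad}_x$ with $x\in X^B$.

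With these identifications in hand, $\iota$ is induced by viewing a derivation vanishing on $B$ as an ordinary derivation of $A$ — it is the dual of the canonical comparison chain map from the ordinary bar resolution of $A$ to the relative one — while $\kappa$ is induced by restricting derivations along $B\subseteq A$; both are well defined because inner derivations restrict to inner derivations. It then remains to prove exactness at the two nonzero spots, and here the arguments are immediate. For injectivity of $\iota$: if a derivation $\varphi\colon A\to X$ vanishes on $B$ and is inner in $\HH^1(A,X)$, say $\varphi=\mathsf{ad}_x$, then $0=\varphi(b)=bx-xb$ for every $b\in B$, hence $x\in X^B$ and $\varphi$ already represents $0$ in $\HH^1(A|B,X)$. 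For exactness at $\HH^1(A,X)$: the composite $\kappa\iota$ is zero because a derivation vanishing on $B$ restricts to $0$ on $B$; conversely, if $\varphi\colon A\to X$ is a derivation whose restriction $\left.\varphi\right|_B$ is inner, say $\left.\varphi\right|_B=\left.\mathsf{ad}_x\right|_B$ for some $x\in X$, then $\varphi-\mathsf{ad}_x$ is a derivation of $A$ that vanishes on $B$ and is cohomologous to $\varphi$ in $\HH^1(A,X)$, so $[\varphi]$ lies in the image of $\iota$.

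The main point deserving care is the computation in the first paragraph: the precise identification of the relative $1$-cocycles with the derivations vanishing on $B$ — which is exactly Lemma~\ref{derivation bimodule morphism} — and of the relative $1$-coboundaries with the inner derivations attached to $B$-central elements of $X$. Once this is settled, exactness follows by the routine chase above, with no finiteness or projectivity hypothesis on the inclusion $B\subseteq A$ or on $X$. (For the special case $B\subseteq B_F$ one could instead invoke the length-one relative projective resolution together with the Jacobi-Zariski long exact sequence mentioned in the Introduction, but the direct argument is shorter and fully general.)
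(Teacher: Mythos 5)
Your proof is correct and follows essentially the same route as the paper: both identify $\HH^1(A|B,X)$ with derivations vanishing on $B$ (via Lemma~\ref{derivation bimodule morphism}) modulo inner derivations $\mathsf{ad}_x$ with $x\in X^B$, and then prove injectivity of $\iota$ and exactness at $\HH^1(A,X)$ by exactly the same computation with $\varphi-\varphi_x$. The only difference is that you spell out, via the relative bar resolution, the cocycle/coboundary description that the paper treats as implicit, which is a welcome but not essentially different addition.
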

\begin{proof}
Let $\varphi:A\to X$ be a relative derivation. Namely $\varphi$ is a derivation which is a $B$-bimodule map. Suppose that $\iota(\varphi)$ is $0$. In other words there exists $x\in X$ such that $\iota(\varphi)(a)=xa-ax.$
By Lemma \ref{derivation bimodule morphism}, we have $\left.\varphi\right|_B=\left.\iota(\varphi)\right|_B=0$. Hence $$x\in X^B=\{x\in X\mid  bx=xb \mbox{ for all } b\in B\}.$$
That is  $\iota(\varphi)$ is a relative inner derivation, and therefore is $0$ in $\HH^1(A|B, X)$.

Note that by Lemma \ref{derivation bimodule morphism} we already have $\Im\iota\subset \Ker\kappa$. In order to prove the reverse inclusion, let $\varphi:A\to X$ be a derivation such that $\left.\varphi\right|_B:B\to X$ is inner. That is there exists $x\in X$ such that for all $b\in B$ we have $\varphi(b)=xb-bx.$ Let $\varphi_x:A\to X$ be the inner derivation given by $\varphi_x(a)=xa-ax$. Hence $\varphi$ and $\varphi - \varphi_x$ are equal in $\HH^1(A, X)$. Moreover $\left.(\varphi-\varphi_x)\right|_B=0$. By Lemma \ref{derivation bimodule morphism} the derivation $\varphi-\varphi_x$ is a $B$-bimodule map, and therefore $\varphi-\varphi_x \in \Im\iota$.\qed
\end{proof}

\begin{rema}
The first Hochschild cohomology vector space of an algebra $A$ is a Lie algebra: if $\varphi, \psi :A\to A$ are derivations, then $[\varphi,\psi]=\varphi\psi-\psi\varphi$. Moreover $\HH^1(A|B, A)$ is also a Lie algebra by a straightforward verification, and $\iota:\HH^1(A|B, A)\to \HH\HH^1(A)$ is a morphism of Lie algebras. The question naturally arises to know if the Lie subalgebra $\HH^1(A|B, A)$ of $\HH\HH^1(A)$ has an ideal complementing it. The following example shows that this is not the case.
\end{rema}

\begin{exam}
Let $Q=e\cdot\rightrightarrows \cdot f $ be the Kronecker quiver with arrows $u$ and $v$, and let $B=kQ$. Let $a$ be a new arrow from $e$ to $f$, and let $B_F=kQ_F$.  It is straightforward to show that $\HH\HH^1(B_F)\simeq \mathsf{sl}_3(k)$. This Lie algebra is simple, hence there is no ideal complementing the proper non zero sub-Lie algebra $\HH^1(B_F|B, B_F)$.
\end{exam}

\begin{theo}\label{theshortexact}
Let $B=kQ/I$ be a bound quiver algebra, let $F$ be a set of new arrows with no relative cycles, and let $B_F$ be the algebra defined in Section \ref{B_F}. Let $X$ be a $B_F$-bimodule. There is a short exact sequence
$$0\to \HH^1(B_F|B, X)\stackrel{\iota}{\to} \HH^1(B_F, X) \stackrel{\kappa}{\to} \HH^1(B, X)\to 0.$$
\end{theo}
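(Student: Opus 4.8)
The plan is to deduce the short exact sequence from Proposition~\ref{beginning exact} together with one extra surjectivity statement. Applied to the inclusion $B\subset B_F$, Proposition~\ref{beginning exact} already gives exactness at $\HH^1(B_F|B,X)$ and at $\HH^1(B_F,X)$, so the only remaining content of the theorem is the surjectivity of $\kappa\colon\HH^1(B_F,X)\to\HH^1(B,X)$. Since $\kappa$ is induced by restricting derivations along $B\hookrightarrow B_F$, and the restriction of an inner derivation of $B_F$ is an inner derivation of $B$, it suffices to show that every derivation $\psi\colon B\to X$ extends to a derivation $\widetilde\psi\colon B_F\to X$.

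To produce the extension I would use the identification $B_F=T_B(N)$ of Proposition~\ref{isomorphic}, the crucial feature being that $N$ is a \emph{projective} $B$-bimodule. Let $\Lambda$ be the trivial extension of $B_F$ by $X$, i.e. the algebra $B_F\oplus X$ in which $X$ is a square-zero ideal, and let $\pi\colon\Lambda\to B_F$ be the projection. Derivations $B_F\to X$ are precisely the algebra sections of $\pi$, via $\widetilde\psi\mapsto\mathrm{id}_{B_F}+\widetilde\psi$. By the universal property of the tensor algebra, an algebra section of $\pi$ amounts to the following data: an algebra section $g\colon B\to\Lambda$ of $\pi$ over $B$, which we take to be $g(b)=b+\psi(b)$, together with a $B$-bimodule morphism $h\colon N\to\Lambda$ lifting the inclusion $N\hookrightarrow B_F$ through $\pi$, where $\Lambda$ is regarded as a $B$-bimodule through $g$, i.e. $b\cdot\lambda\cdot b'=g(b)\,\lambda\,g(b')$. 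Writing $h(n)=n+d_1(n)$ with $d_1\colon N\to X$ linear, the condition that $h$ respect this bimodule structure unwinds, using that $X$ is square-zero, into the single identity $d_1(bnb')=\psi(b)nb'+b\,d_1(n)\,b'+bn\,\psi(b')$ for $b,b'\in B$ and $n\in N$, which is exactly the compatibility that lets $\psi$ and $d_1$ assemble into a well-defined derivation $\widetilde\psi$ of $T_B(N)$ restricting to $\psi$ on $B$.

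Thus the argument reduces to producing $h$. A short computation shows that, once $\Lambda$ is given the $g$-twisted $B$-bimodule structure above and $B_F$ its standard $B$-bimodule structure, $\pi\colon\Lambda\to B_F$ is a surjective $B$-bimodule morphism. Since $N$ is projective as a $B$-bimodule and $N\hookrightarrow B_F$ is a $B$-bimodule morphism, it lifts through $\pi$ to a $B$-bimodule morphism $h\colon N\to\Lambda$; its $X$-component is the sought $d_1$, and hence $\psi$ extends to $\widetilde\psi$. This proves $\kappa$ surjective, and combined with Proposition~\ref{beginning exact} it yields the short exact sequence. The step I expect to need the most care is the bookkeeping of the two $B$-bimodule structures on $\Lambda$ — the standard one and the one twisted by $g$ — and the verification that $\pi$ is equivariant for the twisted structure on its source and the standard structure on its target; everything else is the universal property of $T_B(N)$ and projectivity of $N$. (One may also observe that the no-relative-cycles hypothesis enters only through the corollary after Proposition~\ref{isomorphic}, ensuring $B_F$ is a finite-dimensional bound quiver algebra, whereas the extension argument itself uses nothing beyond projectivity of $N$; and when $X$ is finite-dimensional, surjectivity of $\kappa$ alternatively follows from Kaygun's Jacobi--Zariski long exact sequence together with the vanishing $\HH^{n}(B_F|B,X)=0$ for $n\geq 2$ coming from the length-one relative projective resolution of $B_F$ over $B$.)
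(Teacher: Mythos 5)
Your proposal is correct, and its overall skeleton coincides with the paper's: both observe that Proposition~\ref{beginning exact} already gives exactness at $\HH^1(B_F|B,X)$ and $\HH^1(B_F,X)$, so that the whole content of the theorem is the surjectivity of $\kappa$, proved by extending an arbitrary derivation $\psi\colon B\to X$ to a derivation of $B_F=T_B(N)$. Where you differ is in how the extension is produced. The paper writes it down explicitly: using the decomposition of $B_F$ into summands $Ba_nB\cdots Ba_1B$, it defines $\varphi'$ by applying $\varphi$ to each $B$-factor (so $\varphi'(a)=0$ for $a\in F$) and asserts that the verification that this is a well-defined derivation is straightforward. You instead encode derivations $B_F\to X$ as algebra sections of the square-zero extension $\Lambda=B_F\oplus X\to B_F$, take the section $g(b)=b+\psi(b)$ on $B$, and obtain the needed $B$-bimodule map $h\colon N\to\Lambda$ (for the $g$-twisted structure) by lifting the inclusion $N\hookrightarrow B_F$ through $\pi$ using the projectivity of $N$ as a $B$-bimodule; the universal property of $T_B(N)$ then assembles $g$ and $h$ into a section, i.e.\ an extension of $\psi$. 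Your bookkeeping of the twisted structure and the identity $d_1(bnb')=\psi(b)nb'+b\,d_1(n)\,b'+bn\psi(b')$ is right, and in fact the paper's explicit $\varphi'$ is exactly the lift obtained from the choice $d_1(t(a)\otimes s(a))=0$. What your route buys is that the well-definedness and derivation checks left implicit in the paper are replaced by standard general principles (square-zero extensions classify derivations, universal property of the tensor algebra, projectivity of $N$), and it makes transparent that the no-relative-cycles hypothesis plays no role in the surjectivity of $\kappa$ (it is only used elsewhere to guarantee that $B_F$ is finite dimensional); what it loses is the concrete formula for the extended derivation. Your closing remark that, for finite-dimensional $X$, surjectivity also follows from Kaygun's Jacobi--Zariski sequence together with $\HH^2(B_F|B,X)=0$ is precisely the alternative the paper itself mentions after the theorem.
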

\begin{proof}
To prove that $\kappa$ is surjective, let $\varphi: B\to X$ be a derivation. Let $\gamma=(a_n,\dots a_1)$ be a relative path and let
$$ Ba_nB\dots Ba_1B = Bt(a_n)\otimes s(a_n)B\dots Bt(a_1)\otimes s(a_1)B $$
be the corresponding direct summand in $B_F=T_B(N)$. Let $\varphi': B_F\to X$ be given by
$$\varphi'(\beta_{n+1}a_n\beta_n\dots\beta_2a_1\beta_1)=\sum_{i=1}^{n+1} \beta_{n+1}a_n\beta_n\dots\varphi(\beta_i)\dots\beta_2a_1\beta_1,$$
 $\left.\varphi'\right|_B=\varphi$. Observe that $\varphi'(a)=0$ for all $a\in F$. It is straightforward to verify that $\varphi'$ is a derivation. \qed
\end{proof}
As mentioned before, A. Kaygun has obtained a Jacobi-Zariski long cohomological exact sequence if $A/B$ is a projective $B$-bimodule, and if $X$ is finite dimensional. In our previous result, $B_F/B$ is a projective $B$-bimodule. Moreover $\HH^2(B_F|B, X)=0,$ as a consequence of Theorem \cite[Theorem 3.3]{CIBILSLANZILOTTAMARCOSSOLOTAR}. Hence if $X$ is finite dimensional the above short exact sequence can be inferred from \cite{KAYGUN,KAYGUNe} and from \cite{CIBILSLANZILOTTAMARCOSSOLOTAR}.

\section{\sf First Hochschild cohomology}\label{First Hochschild cohomology}

Let $B=kQ/I$ be a bound quiver algebra, and let $F$ be a set of new arrows with no relative cycles, that is $B_F$ is finite dimensional.  In this section we provide a formula which computes the dimension of $\HH\HH^1(B_F)$.
\begin{defi}\label{extended}
A \emph{extended relative path} $\omega$ of length $n>0$ is a sequence $(y,\gamma, x)$ where $y,x\in Q_0$  and $\gamma$ is a relative path of length $n$ such that $$yBt(\gamma)\neq 0 \mbox{ and } s(\gamma)Bx\neq 0.$$
We set $t(\omega)=y$ and $s(\omega)=x$. Moreover we put
$$\dim_k\omega = \left(\dim_k yBt(\gamma)\right)(\dim_k\gamma) \left(\dim_k s(\gamma)Bx\right).$$
The set of extended relative paths of length $n$ is denoted by $W_n$.

An \emph{extended relative path} of length $0$ is $\omega=(y,x)$ such that $yBx\neq 0$. In this case, we set $\dim_k \omega =\dim_kyBx$. The set of extended paths of length $0$ is denoted by $W_0$, while $W_*=\bigcup_{n\geq 0}W_n.$
\end{defi}
For the purpose of this section, let $ZA$ denote the center of an algebra $A$. We are going to use $I_x$ and $P_x$ respectively for the injective envelope and the projective cover of the simple $B$-module at $x$ for $x\in Q_0$. And finally, we use $F/\!/W_*$ for the set of pairs $(a,\omega)\in F\times W_*$ such that $s(a)=s(\omega)$ and $t(a)=t(\omega).$

\begin{theo}\label{formulaH^1}
Let $B=kQ/I$ be a bound quiver algebra, and let $F$ be a set of new arrows with no relative cycles.
Let $\Delta= \dim_k\HH\HH^1(B_F)- \dim_k\HH\HH^1(B)$. Then
\begin{align*}
\Delta =\ &\dim_k ZB_F-\dim_k ZB \ + \sum_{(a,\omega)\in F/\!/W_*} \dim_k\omega \ +\\
&\sum_{\gamma\in R_*} \dim_k\gamma\left( \dim_k\Ext^1_B(I_{s(\gamma)}, P_{t(\gamma)}) -  \dim_k\Hom_B(I_{s(\gamma)}, P_{t(\gamma)})\right).
\end{align*}
\end{theo}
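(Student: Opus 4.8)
The plan is to feed the short exact sequence from Theorem \ref{theshortexact} with $X = B_F$ viewed as a $B_F$-bimodule, and extract dimensions by additivity:
$$\Delta = \dim_k\HH\HH^1(B_F) - \dim_k\HH\HH^1(B) = \dim_k\HH^1(B_F|B, B_F) + \big(\dim_k\HH^1(B, B_F) - \dim_k\HH^1(B, B)\big).$$
So the proof splits into computing the two bracketed contributions separately and then matching them term by term against the claimed formula.

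For the relative term $\dim_k\HH^1(B_F|B, B_F)$, I would invoke the fact — already cited from \cite{CIBILSLANZILOTTAMARCOSSOLOTAR} — that since $B_F = T_B(N)$ is a tensor algebra of the projective $B$-bimodule $N$, it admits a relative projective resolution of length one; explicitly $0 \to B_F \otimes_B N \otimes_B B_F \to B_F \otimes_B B_F \to B_F \to 0$. Applying $\Hom_{B_F\text{-bimod}}(-,\, B_F)$ and identifying relative $\Hom$'s as $B$-bimodule $\Hom$'s into $B_F$, the relative cohomology $\HH^1(B_F|B, B_F)$ becomes the cokernel of a map whose source and target are $B$-bimodule homomorphism spaces; taking Euler characteristics, $\dim_k\HH^0(B_F|B,B_F) - \dim_k\HH^1(B_F|B,B_F)$ is an alternating sum of $\dim_k\Hom_{B\text{-bimod}}(-, B_F)$ terms. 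Using $N = \bigoplus_{a\in F} Bt(a)\otimes s(a)B$ and the tensor-power decomposition $N^{\otimes_B n} \simeq \bigoplus_{\gamma\in R_n}\dim_k\gamma\,(Bt(\gamma)\otimes s(\gamma)B)$ together with $B_F = T_B(N) = \bigoplus_{\gamma\in R_*, \gamma \text{ incl. length }0} \cdots$, a $\Hom_{B\text{-bimod}}(Bt(\gamma)\otimes s(\gamma)B,\, B_F)$ factor becomes $\bigoplus$ over relative paths $\delta$ (and length-zero extended pieces) of $t(\gamma)B\ t(\delta)\otimes s(\delta)B\ s(\gamma)$-type spaces, and counting these pairs is precisely what produces the sum $\sum_{(a,\omega)\in F/\!/W_*}\dim_k\omega$ after one identifies a new arrow $a$ with the index labeling one copy of $N$ and bundles the surrounding path data into an extended relative path $\omega$. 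The $\HH^0(B_F|B,B_F)$ piece contributes to the center difference $\dim_k ZB_F - \dim_k ZB$: indeed $\HH^0(B_F|B, B_F) = (B_F)^{B\text{-bimod}} = Z B_F$ since $B$ generates $B_F$, and the center of $B$ maps in.

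For the difference $\dim_k\HH^1(B, B_F) - \dim_k\HH^1(B, B)$, I would decompose the $B$-bimodule $B_F = T_B(N) = B \oplus \bigoplus_{n>0} N^{\otimes_B n}$, so that $\HH^1(B, -)$ is additive and the $B$-summand cancels the $\HH^1(B,B)$ term, leaving $\sum_{n>0}\dim_k\HH^1(B, N^{\otimes_B n})$. Using $N^{\otimes_B n} \simeq \bigoplus_{\gamma\in R_n}\dim_k\gamma\,(Bt(\gamma)\otimes s(\gamma)B)$ and the standard identification of Hochschild cohomology with a tensor-product coefficient $\HH^*(B, Bt(\gamma)\otimes s(\gamma)B) = \Ext^*_{B}(s(\gamma)B^{*}, Bt(\gamma))$ — equivalently, since $Bt(\gamma)\otimes s(\gamma)B$ as a bimodule computes $\Ext$ between the projective $P_{t(\gamma)}$ and the injective $I_{s(\gamma)}$ — this sum reorganizes as $\sum_{\gamma\in R_*}\dim_k\gamma\,\dim_k\HH^1(B, Bt(\gamma)\otimes s(\gamma)B)$. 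Here one must be careful that $\HH^0$ of these coefficient bimodules also contributes and must be reconciled with the remaining part of the center difference; tracking the $\HH^0$ bookkeeping carefully, the genuinely degree-one contributions assemble into $\sum_{\gamma\in R_*}\dim_k\gamma\big(\dim_k\Ext^1_B(I_{s(\gamma)}, P_{t(\gamma)}) - \dim_k\Hom_B(I_{s(\gamma)}, P_{t(\gamma)})\big)$, the $-\Hom$ correction coming exactly from subtracting the $\HH^0$-type overcount.

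The main obstacle is the careful bookkeeping in degree zero: both the relative resolution computation and the $\HH^1(B, N^{\otimes_B n})$ computation naturally produce Euler characteristics mixing $\HH^0$ and $\HH^1$, and one must show that all the $\HH^0$ (center / Hom) contributions coming from the three sources organize cleanly into the single term $\dim_k ZB_F - \dim_k ZB$ plus the $-\dim_k\Hom_B(I_{s(\gamma)}, P_{t(\gamma)})$ corrections, with no residue. This requires an explicit comparison between $ZB_F$ and the direct-sum decomposition of $B_F$ as a $B$-bimodule — identifying which central elements of $B_F$ live in which $N^{\otimes_B n}$ summand — which is the one genuinely delicate point; the rest is a matter of rewriting the combinatorial indexing sets ($R_*$, $W_*$, $F/\!/W_*$) consistently. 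Once degree zero is pinned down, the degree-one terms fall out of additivity of $\HH^1$ and the length-one relative resolution with no further difficulty.
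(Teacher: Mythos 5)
Your overall architecture is the same as the paper's: the short exact sequence of Theorem \ref{theshortexact} with $X=B_F$, the length-one relative resolution of $B_F=T_B(N)$ to compute $\HH^1(B_F|B,B_F)$ as a cokernel (Corollary \ref{A}), the decomposition of $\Hom_{B-B}(N,B_F)$ giving $\sum_{(a,\omega)\in F/\!/W_*}\dim_k\omega$ (Proposition \ref{B}), and the Cartan--Eilenberg identification giving $\HH^1(B,B_F)\simeq \HH\HH^1(B)\oplus\bigoplus_{\gamma\in R_*}\dim_k\gamma\,\Ext^1_B(I_{s(\gamma)},P_{t(\gamma)})$ (Lemma \ref{H^1(B,B_F)}). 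The degree-one part of your sketch is correct.

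The degree-zero accounting, however, contains a genuine error and is exactly the part you leave open. You assert $\HH^0(B_F|B,B_F)=(B_F)^{B}=ZB_F$ ``since $B$ generates $B_F$''. Neither step is right as stated: $B$ alone does not generate $B_F$ (the new arrows are needed), and $(B_F)^{B}$ is in general strictly larger than $ZB_F$. What is true is $\HH^0(B_F|B,B_F)=\HH^0(B_F,B_F)=ZB_F$, because the relative $\HH^0$ is the kernel of $\delta\colon (B_F)^{B}\to\Hom_{B-B}(N,B_F)$ and $B$ together with $N$ generates $B_F$. With this, Corollary \ref{A} gives $\dim_k\HH^1(B_F|B,B_F)=\dim_k\Hom_{B-B}(N,B_F)-\dim_k(B_F)^{B}+\dim_kZB_F$, and the terms $-\dim_kZB-\sum_{\gamma}\dim_k\gamma\,\dim_k\Hom_B(I_{s(\gamma)},P_{t(\gamma)})$ of the formula come precisely from $-\dim_k(B_F)^{B}$: indeed $(B_F)^{B}=\HH^0(B,B_F)$ decomposes, by the same degree-zero Cartan--Eilenberg identification used in Lemma \ref{H^1(B,B_F)}, as $ZB\oplus\bigoplus_{\gamma\in R_*}\dim_k\gamma\,\Hom_B(I_{s(\gamma)},P_{t(\gamma)})$. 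In particular the ``genuinely delicate point'' you announce --- locating the central elements of $B_F$ inside the graded pieces $N^{\otimes_Bn}$ --- is not needed at all; $ZB_F$ enters only through its appearance as $\HH^0(B_F|B,B_F)$, and no residue analysis is required. Once you replace the claim $(B_F)^{B}=ZB_F$ by these two correct identifications, your sketch closes and coincides with the paper's proof.
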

Before proving this formula, we state two corollaries and we consider two examples.
\begin{coro}\label{onearrow}
Let $a$ be a single new arrow from $e$ to $f$ which is not a relative loop. Let $\Delta= \dim_k\HH\HH^1(B_{\{a\}})- \dim_k\HH\HH^1(B)$. Then
\begin{align*} \Delta = &\ \dim_k ZB_{\{a\}}-\dim_k ZB \\&+ \dim_kfBe + \dim_k fBf \ \dim_k eBe \\
&+ \dim_k\Ext^1_B(I_{e}, P_{f}) -  \dim_k\Hom_B(I_{e}, P_{f}).
\end{align*}\end{coro}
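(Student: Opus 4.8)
The plan is to deduce the formula directly from Theorem~\ref{formulaH^1} by specializing to the one-element set of new arrows $F=\{a\}$ with $s(a)=e$ and $t(a)=f$. No new homological input is required: the whole task is to describe the index sets $R_*$ and $F/\!/W_*$ explicitly in this situation and to evaluate the combinatorial quantities $\dim_k\gamma$ and $\dim_k\omega$ appearing in the general formula.

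First I would exploit the hypothesis that $a$ is not a relative loop, which by definition says $s(a)Bt(a)=eBf=0$ (in particular $e\neq f$, since $eBe$ contains the vertex idempotent $e$ and is therefore nonzero, so $eBe\neq 0$). Hence a sequence $(a,\dots,a)$ of length $n\geq 2$ can never satisfy the composability condition $s(a)Bt(a)\neq 0$, so the only relative path is $\gamma_0=(a)$, of length $1$, with $s(\gamma_0)=e$, $t(\gamma_0)=f$ and $\dim_k\gamma_0=1$ (an empty product). Consequently the last sum in Theorem~\ref{formulaH^1} collapses to the single summand $\dim_k\Ext^1_B(I_e,P_f)-\dim_k\Hom_B(I_e,P_f)$.

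Next I would enumerate $F/\!/W_*$, that is, the extended relative paths $\omega$ with $s(\omega)=e$ and $t(\omega)=f$. An extended relative path of length $0$ is a pair $(y,x)$ with $yBx\neq 0$; the source/target constraints force $(y,x)=(f,e)$, so the only possible such $\omega$ is $(f,e)$, with $\dim_k\omega=\dim_k fBe$, and it occurs in $F/\!/W_*$ precisely when $fBe\neq 0$ --- but in every case its contribution equals $\dim_k fBe$. An extended relative path of positive length is built on a relative path, hence on $\gamma_0$; the constraints then force $\omega=(f,\gamma_0,e)$, and since $fBf$ and $eBe$ always contain the respective idempotents this $\omega$ does exist, with $\dim_k\omega=(\dim_k fBf)(\dim_k\gamma_0)(\dim_k eBe)=\dim_k fBf\ \dim_k eBe$. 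Therefore $\sum_{(a,\omega)\in F/\!/W_*}\dim_k\omega=\dim_k fBe+\dim_k fBf\ \dim_k eBe$. Substituting this, together with the unchanged term $\dim_k ZB_{\{a\}}-\dim_k ZB$ and the collapsed $\Ext$/$\Hom$ summand, into the formula of Theorem~\ref{formulaH^1} gives the claim.

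There is no genuine obstacle here; the argument is pure bookkeeping. The only points needing a little care are the use of the ``not a relative loop'' hypothesis to rule out all relative (and hence all extended relative) paths of length $\geq 2$, and the remark that the automatic nonvanishing of $fBf$ and $eBe$ --- together with the fact that $\dim_k fBe=0$ exactly when the corresponding length-$0$ extended path is absent --- makes the formula come out uniformly, with no case distinctions.
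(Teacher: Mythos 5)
Your proposal is correct and follows essentially the same route as the paper: specialize Theorem~\ref{formulaH^1} to $F=\{a\}$, use the non-relative-loop hypothesis ($eBf=0$) to get $R_*=\{(a)\}$ and $\{a\}/\!/W_*=\{(a,(f,e)),(a,(f,a,e))\}$, and evaluate the dimensions. Your extra remarks (that $e\neq f$, and that the length-$0$ term contributes $\dim_k fBe$ uniformly even when $fBe=0$) are harmless refinements of the same bookkeeping.
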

\begin{proof}
We have that $\{a\}/\!/W_*=\{(a,(f,e)), (a,(f,a,e))\}$ because $a$ is not a relative loop.
Hence
\begin{align*}\sum_{(a,\omega)\in F/\!/W_*} \dim_k\omega=&\dim_k(f,e)+\dim_k(f,a,e)=\\&\dim_kfBe + \dim_k fBf \ \dim_k eBe.
\end{align*}
On the other hand $R_*=\{(a)\}.$ \qed
\end{proof}

\begin{exam}
Let $Q$ be the quiver $$f \cdot \longrightarrow\cdot\longrightarrow\cdots\longrightarrow\cdot\longrightarrow\cdot e$$
and let $\beta_5\beta_4\beta_3\beta_2\beta_1$ be a decomposition of the path from $f$ to $e$, where the lengths  $l(\beta_4), l(\beta_3)$ and $l(\beta_2)$ are strictly positive. Hence there are at least $4$ vertices in $Q$. Let $I=\langle \beta_4\beta_3, \beta_3\beta_2\rangle$, and let $B=kQ/I$. There are no cycles of positive length and $Q$ is connected, hence $ZB=k$. Let $E=kQ_0$. Since $E$ is semisimple, $\HH\HH^1(B)= \HH^1(B|E, B$), and an easy computation shows that the latter is $0$.

Let $F=\{a\}$ be a new arrow from $e$ to $f$. Observe that $a$ is not a relative loop, that is $B_F$ is finite dimensional. The non-zero cycles of $B_F$ are the cycles whose sources are at the vertices of $\beta_3$ different from $s(\beta_3)$ and  $t(\beta_3)$. If $l(\beta_3)=1$,  this set is empty.  The sum of the non zero cycles is an element of the center of $B_F$. Hence
\begin{itemize}
\item if $l(\beta_3)>1$ then $\dim_kZB_F=2$,
\item if $l(\beta_3)=1$ then $\dim_kZB_F=1$.
\end{itemize}
Let $y$ be the target of the first arrow of $\beta_3$, we have $I_e=P_{y}$ and $\Ext^1_B(I_e, P_f)=0$. On the other hand $\dim_k\Hom_B(I_e, P_f)=\dim_k\Hom_B(P_y, P_f)=\dim_k yBf$. Hence
\begin{itemize}
\item if $l(\beta_3)>1$ then $\dim_k\Hom_B(I_e, P_f)=1$,
\item if $l(\beta_3)=1$ then $y=t(\beta_3)$ and $\dim_k\Hom_B(I_e, P_f)=0$.
\end{itemize}
Hence by the previous corollary the dimension of $\HH\HH^1(B_{\{a\}})$ is always equal to one.
\end{exam}

\begin{exam}
A \emph{toupie quiver} $Q$ has a single source vertex $e$, a single sink vertex $f$, and other vertices are the start of exactly one arrow, as well as the target of exactly one arrow. A path from $e$ to $f$ is called a \emph{branch}. The bound quiver algebra $B=kQ/I$ is called a \emph{toupie algebra}. Note that canonical algebras introduced in \cite{RINGEL} are instances of toupie algebras. The dimension of the first Hochschild cohomology of toupie algebras has been computed as follows in \cite{GATICALANZILOTTA}, see also \cite{ARTENSTEINLANZILOTTASOLOTAR}.

Let $a$ be the number of branches which are arrows. Let $m$ be the number of branches which are in $I$. Furthermore, two branches are related if they appear in the same minimal relation of $I$. This generates an  equivalence relation among these branches. Denoting by $r$ the number of equivalence classes we have $\dim_k\HH\HH^1(B) = r+m+a(\dim_kfBe) -1.$

We add a new branch of length $n\geq 2$ from $e$ to $f$. This is obtained by adding first $n-1$ new vertices providing the quiver $Q'$ and the algebra $B'=kQ'/I'$ where $I'=\langle I\rangle_{kQ'}$. We have that $\dim_k ZB'=n$, while $\HH\HH^1(B)=\HH\HH^1(B')$. Then consider the appropriate set $F$ of $n$ new arrows to get the new branch. The algebra $B'_F$ is still a toupie algebra, and a simple computation shows that the formula above is in accordance with  Theorem \ref{formulaH^1}.

Adding a new arrow from $e$ to $f$ also provides a toupie algebra, and we also have an accordance between the formulas.

\end{exam}

 Using Theorem \ref{formulaH^1} we get a new proof of the following result.
\begin{coro}\cite{CIBILS2000Cordoba}
Let $Q$ be a quiver without cycles. Let $c$ be the number of connected components of $Q$, and $Q_*$ the set of paths.
$$\dim_k\HH\HH^1(kQ)=c-|Q_0|+|Q_1/\!/Q_*|.$$
\end{coro}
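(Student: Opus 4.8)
The plan is to derive this as a special case of Theorem \ref{formulaH^1}, applied iteratively (or in one stroke) to build the whole quiver $Q$ starting from its set of vertices. First I would take $B = kQ_0$, the semisimple algebra on the vertex set of $Q$, so that $B$ is a product of copies of $k$ indexed by $Q_0$; here $\HH\HH^1(B)=0$ and $\dim_k ZB = |Q_0|$. Then I would let $F = Q_1$ be the full set of arrows of $Q$, so that $B_F = kQ$. Since $Q$ has no cycles, $F$ has no relative cycles (a relative path here is just a path in $Q$, and a relative cycle would be a genuine oriented cycle), so Theorem \ref{formulaH^1} applies.

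Next I would evaluate each term of the formula in this degenerate situation. Because $B=kQ_0$ is semisimple, for every vertex $x$ the simple module, its projective cover $P_x$ and its injective envelope $I_x$ all coincide with the one-dimensional simple $S_x$. Hence $\Ext^1_B(I_{s(\gamma)},P_{t(\gamma)})=0$ for all relative paths $\gamma$, so the entire sum over $R_*$ involving $\Ext^1$ and $\Hom$ collapses: $\dim_k\Hom_B(S_{s(\gamma)},S_{t(\gamma)})$ is $1$ if $s(\gamma)=t(\gamma)$ and $0$ otherwise, but $s(\gamma)=t(\gamma)$ would make $\gamma$ a relative cycle (since $S_{s(\gamma)}Bx = \delta$ and in particular $s(\gamma)Bt(\gamma)\neq 0$), which is excluded; so that sum is $0$ as well. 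Similarly, the set $W_*$ of extended relative paths: an extended relative path of length $0$ is a pair $(y,x)$ with $yBx\neq 0$, which over $B=kQ_0$ forces $y=x$; and an extended relative path of positive length $n$ is $(y,\gamma,x)$ with $yBt(\gamma)\neq 0$ and $s(\gamma)Bx\neq 0$, forcing $y=t(\gamma)$ and $x=s(\gamma)$, so it is essentially just a relative path (i.e.\ a path in $Q$) of length $n$, with $\dim_k\omega = \dim_k\gamma = 1$. Then $F/\!/W_* = Q_1/\!/W_*$ consists of pairs $(a,\omega)$ with $s(a)=s(\omega)$, $t(a)=t(\omega)$; matching up a length-$0$ extended path (a loop at a vertex, impossible since $Q$ has no cycles so no loops, but anyway an arrow $a$ with $s(a)=t(a)$ is excluded) and length-$n$ extended paths (paths of length $n$ in $Q$ parallel to $a$), one sees $|F/\!/W_*| = |Q_1/\!/Q_*|$ after checking that paths of length $0$ contribute arrows-parallel-to-vertices, i.e.\ nothing, so that the count is exactly pairs $(a,\gamma)$ with $\gamma\in Q_*$ parallel to $a$; note $Q_*$ includes the length-$0$ paths (vertices), and an arrow parallel to a vertex would be a loop, again excluded, so the two counts agree.

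Finally I would handle the center term: $\dim_k ZB_F - \dim_k ZB = \dim_k Z(kQ) - |Q_0|$. For a quiver $Q$ without oriented cycles, $Z(kQ)$ has a basis given by the sums $\sum_{x\in C} e_x$ over the connected components $C$ of $Q$ (the only central elements are combinations of vertex idempotents constant on components, since there are no nontrivial cycles and no paths of positive length can be central), so $\dim_k Z(kQ) = c$. Therefore $\dim_k ZB_F - \dim_k ZB = c - |Q_0|$. Substituting everything into Theorem \ref{formulaH^1} gives
$$\dim_k\HH\HH^1(kQ) - 0 = (c-|Q_0|) + |Q_1/\!/Q_*| + 0,$$
which is the claimed formula. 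I expect the main obstacle to be the bookkeeping around the length-$0$ extended relative paths and the precise identification $|F/\!/W_*| = |Q_1/\!/Q_*|$ (making sure the conventions about whether $Q_*$ includes vertices are consistent on both sides), together with carefully justifying $\dim_k Z(kQ) = c$ for an acyclic quiver; all of these are elementary, but must be spelled out to make the reduction airtight.
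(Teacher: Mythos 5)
Your proposal is correct and follows essentially the same route as the paper: specialize Theorem \ref{formulaH^1} to $B=kQ_0$, $F=Q_1$, note that semisimplicity kills $\HH\HH^1(B)$ and the $\Ext^1$ terms, that acyclicity kills the $\Hom$ terms and gives $\dim_k Z(kQ)=c$, and identify $F/\!/W_*$ with $Q_1/\!/Q_*$. The extra bookkeeping you flag (length-zero extended paths, loops being excluded) is exactly the identification the paper states without elaboration, and your treatment of it is sound apart from the garbled phrase ``$S_{s(\gamma)}Bx=\delta$'', whose intended meaning ($s(\gamma)Bs(\gamma)\neq 0$ would make $\gamma$ a relative cycle) is clear and correct.
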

\begin{proof}
Let $B=kQ_0$ and let $F=Q_1$, hence $B_F=kQ$. Since $B$ is semisimple, $\HH\HH^1(B)=0$. Moreover $B$ is commutative, hence $\dim_kZB=|Q_0|$. On the other hand $\dim_kZ(kQ)=c$, indeed each element of a basis of the center is the sum of the vertices of a connected component.

The relative paths are the paths of positive length, each one has dimension $1$. Enlarged relative paths are all the paths, their dimension is also $1$.

Since $B$ is semisimple, $\Ext^1_B$ vanishes. Moreover, for any $\gamma \in R_*$ we have $s(\gamma)\neq t(\gamma)$. Note that for $x\in Q_0$ we have $I_x=P_x=S_x$ where $S_x$ is the simple module at $x$. Hence
$$\Hom_B(I_{s(\gamma)}, P_{t(\gamma)})=\Hom_B(S_{s(\gamma)}, S_{t(\gamma)})=0.$$
Finally $F/\!/ W_*= Q_1/\!/ Q_*.$\qed
\end{proof}

The proof of Theorem \ref{formulaH^1} relies on the short exact sequence of Theorem \ref{theshortexact} and on the following results.

\begin{lemm}\label{H^1(B,B_F)}In the set up of Theorem \ref{formulaH^1} there is a decomposition
$$H^1(B,B_F)\ \simeq\ \HH\HH^1(B)\oplus\bigoplus_{\gamma\in R_*}\dim_k\gamma \ \Ext^1_B(I_{s(\gamma)}, P_{t(\gamma)}).$$
\end{lemm}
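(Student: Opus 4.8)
The plan is to compute $\HH^1(B,B_F)$ by decomposing the $B$-bimodule $B_F$ according to Proposition~\ref{isomorphic} and then identifying the Hochschild cohomology of each summand. First I would write $B_F = T_B(N) = B \oplus \bigoplus_{n>0} N^{\otimes_B n}$ as a $B$-bimodule, and recall from the discussion following Proposition~\ref{isomorphic} that for $n>0$ one has $N^{\otimes_B n} \simeq \bigoplus_{\gamma\in R_n} \dim_k\gamma\ (Bt(\gamma)\otimes s(\gamma)B)$. Since $\HH^1(B,-)$ commutes with finite direct sums of coefficient bimodules, this reduces the computation to $\HH^1(B,B)=\HH\HH^1(B)$ coming from the $n=0$ summand, plus a contribution $\bigoplus_{\gamma\in R_*}\dim_k\gamma\ \HH^1(B, Bt(\gamma)\otimes s(\gamma)B)$ from the positive-length relative paths.

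The key step is then to show $\HH^1(B, Bt(\gamma)\otimes s(\gamma)B) \simeq \Ext^1_B(I_{s(\gamma)}, P_{t(\gamma)})$ for each $\gamma\in R_*$. The bimodule $P\otimes Q$ with $P=Bt(\gamma)$ a left projective and $Q=s(\gamma)B$ a right projective is a projective $B$-bimodule, in fact a direct summand of $B\otimes B$, so $\HH^n(B, P\otimes Q) = \Ext^n_{B^e}(B, P\otimes Q)$ can be computed from the bar resolution; but more efficiently, for $n\ge 1$ one has $\HH^n(B, P\otimes Q) \simeq \Ext^{n-1}_{B^e}(\Omega^1_{B^e}(B), P\otimes Q)$ where $\Omega^1_{B^e}(B) = \ker(B\otimes B \to B)$, and since $P\otimes Q$ is injective relative to... — more directly, I would use the standard identification $\HH^n(B, \Hom_k(M,N)) \simeq \Ext^n_B(M,N)$ for $B$-modules $M,N$, applied with $N = Bt(\gamma) = P_{t(\gamma)}$ and with $M$ chosen so that $\Hom_k(M,N) \simeq Bt(\gamma)\otimes s(\gamma)B$ as $B$-bimodules. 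Taking $M$ such that $D M \simeq s(\gamma)B$, i.e. $M = D(s(\gamma)B) = I_{s(\gamma)}$ the injective envelope of the simple at $s(\gamma)$ (here $D=\Hom_k(-,k)$), one gets $\Hom_k(I_{s(\gamma)}, P_{t(\gamma)}) \simeq P_{t(\gamma)}\otimes_k D(I_{s(\gamma)}) = Bt(\gamma)\otimes_k s(\gamma)B$, with the correct bimodule structure. Hence $\HH^1(B, Bt(\gamma)\otimes s(\gamma)B) \simeq \Ext^1_B(I_{s(\gamma)}, P_{t(\gamma)})$, and summing over $\gamma\in R_*$ with the multiplicities $\dim_k\gamma$ yields the claimed decomposition.

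I expect the main obstacle to be bookkeeping rather than conceptual: one must verify carefully that the isomorphism $\Hom_k(I_{s(\gamma)}, P_{t(\gamma)}) \simeq Bt(\gamma)\otimes s(\gamma)B$ is an isomorphism of $B$-bimodules (not merely of vector spaces), i.e. that the left $B$-action on $P_{t(\gamma)}$ and the right $B$-action coming from $I_{s(\gamma)} = D(s(\gamma)B)$ match the two-sided action on $Bt(\gamma)\otimes s(\gamma)B$; and that the identification $\HH^n(B,\Hom_k(M,N))\simeq\Ext^n_B(M,N)$ is available in the generality needed (it is standard for $B$ a finite-dimensional algebra and $M,N$ finite-dimensional modules, via the adjunction between $-\otimes_k M$ and $\Hom_k(M,-)$ and the fact that $\Hom_k(M,-)$ is exact). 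A minor point to check is that $N^{\otimes_B n}$, and hence $B_F$, is indeed a direct sum over $R_*$ with finite multiplicities, which is exactly where the no-relative-cycles hypothesis ensures finite-dimensionality of $B_F$; but the decomposition of $\HH^1(B,-)$ itself only uses that $B_F$ is a direct sum of the listed bimodules, which holds regardless.
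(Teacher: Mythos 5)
Your proposal is correct and follows essentially the same route as the paper: decompose $B_F=T_B(N)$ as a $B$-bimodule into $B$ plus the summands $Bt(\gamma)\otimes s(\gamma)B$ indexed by relative paths, identify $Bt(\gamma)\otimes s(\gamma)B\simeq \Hom_k(I_{s(\gamma)},P_{t(\gamma)})$ as bimodules via $I_{s(\gamma)}=(s(\gamma)B)'$ and $P_{t(\gamma)}=Bt(\gamma)$, and apply the Cartan--Eilenberg isomorphism $\HH^n(B,\Hom_k(Y,Z))\simeq\Ext^n_B(Y,Z)$. The only cosmetic difference is your abandoned detour via $\Omega^1_{B^e}(B)$; the final argument matches the paper's.
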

\begin{proof}
We have that
$$B_F=T_B(N)=B\oplus\ N\ \oplus\ N\otimes_B N\ \oplus \ \cdots$$
is a $B$-bimodule decomposition. For $n>0$ there is an isomorphism of $B$-bimodules
$$N^{\otimes_Bn}\simeq \bigoplus_{\gamma \in R_n} \dim_k\gamma \ Bt(\gamma)\otimes s(\gamma)B.$$

Let $U$ and $V$ be respectively a left and a right $B$-module of finite dimension over $k$, and let $V'=\Hom_k(V,k)$ be the $k$-linear dual of $V$. There is a canonical isomorphism of $B$-bimodules between $U\otimes V$ and $\Hom_k(V',U)$. Hence
$$Bt(\gamma)\otimes s(\gamma)B= \Hom_k((s(\gamma)B)', Bt(\gamma)).$$
Moreover $Bt(\gamma)= P_{t(\gamma)}$ and $(s(\gamma)B)'=I_{s(\gamma)}$. Finally it is well known (see  \cite[p. 170, 4.4]{CARTANEILENBERG}) that for left $B$-modules $Y$ and $Z$ there is a canonical isomorphism of vector spaces between $H^n(B, \Hom_k(Y,Z))$ and $\Ext_B^n(Y,Z)$. \qed
\end{proof}

Next we will compute the dimension of the right term of the short exact sequence in Theorem \ref{theshortexact}. For this purpose we recall the following result and we infer a consequence.

\begin{theo}\cite{CIBILSLANZILOTTAMARCOSSOLOTAR}\label{relative resolution}
Let $B$ be a $k$-algebra, let $M$ be any $B$-bimodule and let $T=T_B(M)$ be the tensor algebra. There is a $T\otimes T^{\mathsf{op}}|B\otimes T^{\mathsf{op}}$   projective resolution of $T$
$$0\longrightarrow T\otimes_B M\otimes_B T \stackrel{d}{\longrightarrow} T\otimes_B T\longrightarrow T\longrightarrow 0.$$
\end{theo}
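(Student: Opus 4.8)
The goal is to build, for the tensor algebra $T=T_B(M)$, a resolution of $T$ by $T\otimes T^{\mathsf{op}}$-modules that are relative projective with respect to the subalgebra $B\otimes T^{\mathsf{op}}$, and to show it has the very short shape displayed, with the map $d$ encoding multiplication. The plan is to write down the two maps explicitly, check exactness as a complex of vector spaces (or of left $T$-modules), and then verify that the two nonzero terms $T\otimes_B T$ and $T\otimes_B M\otimes_B T$ are $B\otimes T^{\mathsf{op}}$-relative projective, i.e.\ that each of the two short exact sequences splitting the resolution splits as a sequence of $B\otimes T^{\mathsf{op}}$-modules.

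First I would set up the maps. The augmentation $\mu:T\otimes_B T\to T$ is the multiplication map, which is a $T$-bimodule morphism and is surjective. Its kernel is the usual ideal of noncommutative Kähler differentials. The second map $d:T\otimes_B M\otimes_B T\to T\otimes_B T$ should be $d(t'\otimes m\otimes t'')=t'm\otimes t'' - t'\otimes mt''$, where on the right $m\in M=T_1$ is viewed inside $T$ so that $t'm$ and $mt''$ make sense in $T$; this is manifestly a $T$-bimodule map. One checks immediately $\mu\circ d=0$. The content is that $d$ is injective and that $\operatorname{Im}d=\ker\mu$. For injectivity and exactness one uses the grading of $T$ by tensor powers of $M$: $\ker\mu$ is spanned, using $T=\bigoplus_{n\ge 0}M^{\otimes_B n}$, by elements of the form $x\otimes y - $ (terms moving one factor of $M$ across the tensor), and a direct telescoping/filtration argument on the number of tensor factors shows every element of $\ker\mu$ is uniquely $d$ of an element of $T\otimes_B M\otimes_B T$. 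Concretely, writing a general element of $T\otimes_B T$ in the $M$-adic grading and using that $M^{\otimes_B n}=M\otimes_B M^{\otimes_B(n-1)}$, one peels off one tensor factor at a time; this is the routine but slightly fiddly computation I would not spell out in full.

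The genuinely structural step, and the one I expect to be the main obstacle, is the relative projectivity: one must exhibit $B\otimes T^{\mathsf{op}}$-linear splittings of
$$0\to \ker\mu\to T\otimes_B T\stackrel{\mu}{\to} T\to 0\quad\text{and}\quad 0\to T\otimes_B M\otimes_B T\stackrel{d}{\to} T\otimes_B T\to\ker\mu\to 0.$$
For the first, the map $T\to T\otimes_B T$, $t\mapsto 1\otimes t$, is a section of $\mu$ that is left $B$-linear and right $T$-linear, hence $B\otimes T^{\mathsf{op}}$-linear, giving the splitting; equivalently this says $T\otimes_B T$ is $B\otimes T^{\mathsf{op}}$-relative projective, being induced up from the right $T$-module $T$ along $B\otimes T^{\mathsf{op}}\subset T\otimes T^{\mathsf{op}}$. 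For the second sequence, I would use the grading once more: $T\otimes_B T\cong (T\otimes_B M\otimes_B T)\oplus T$ as graded objects in a way compatible with the $B$-action on the left and the $T$-action on the right, and the complementary copy of $T$ maps isomorphically onto $T\subset T\otimes_B T$ under the section above, so on the complement $\mu$ restricts to an isomorphism onto $T$ and $d$ to an isomorphism onto $\ker\mu$; chasing this identification produces the required $B\otimes T^{\mathsf{op}}$-linear splitting of $d$. Once both splittings are in hand, the displayed two-term complex is a $B\otimes T^{\mathsf{op}}$-relative projective resolution of $T$ as a $T$-bimodule, which is the assertion. I would present the grading bookkeeping carefully since that is where all the real work sits; everything else is formal.
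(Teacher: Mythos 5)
Your proposal is correct and is essentially the paper's route: the paper does not reprove this statement but quotes it from \cite{CIBILSLANZILOTTAMARCOSSOLOTAR}, and its accompanying remark records exactly the two ingredients you supply, namely that both terms are induced from $B\otimes T^{\mathsf{op}}$-modules (hence relative projective -- note this also covers $T\otimes_B M\otimes_B T$, which your splitting argument alone does not) and that there is a left-$B$, right-$T$ linear contraction of homotopy, which is what your section $t\mapsto 1\otimes t$ together with the graded retraction of $d$ amounts to. One cosmetic slip: your second sequence should be $0\to T\otimes_B M\otimes_B T\stackrel{d}{\to} T\otimes_B T\stackrel{\mu}{\to} T\to 0$ itself (the cokernel of $d$ is $T$, not $\ker\mu$), and once exactness and the $B\otimes T^{\mathsf{op}}$-linear section of $\mu$ are established, the $B\otimes T^{\mathsf{op}}$-linear retraction of $d$ follows formally by the splitting lemma, so the step you single out as the main obstacle is automatic.
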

 \begin{rema}
The $T$-bimodules of the resolution above are $B\otimes B^{\mathsf{op}}$-relative projective, hence they are also $B\otimes T^{\mathsf{op}}$-relative projective. In \cite{CIBILSLANZILOTTAMARCOSSOLOTAR} it is proven that there exists a $B\otimes T^{\mathsf{op}}$ contraction of homotopy.
\end{rema}
\begin{coro}\label{A}
Let $X$ be a $B$-bimodule and let $\delta: X^B\to \Hom_{B-B}(M,T)$ be the linear map obtained from $d$  by applying the functor $\Hom_{T-T}(\ \ ,X)$ followed by the canonical identification. Then
$$\HH^1(T|B, X)\simeq \coker \delta.$$
In case $T$ and $X$ are finite dimensional, we have
$$\dim_k\HH^1(T|B,X)=\dim_k\Hom_{B-B}(M,T)-\dim_kX^B+\dim_kX^T.$$
\end{coro}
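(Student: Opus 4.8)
The plan is to apply the functor $\Hom_{T\otimes T^{\mathsf{op}}}(-,X)$ to the short exact sequence of $T$-bimodules in Theorem \ref{relative resolution}, namely
$$0\longrightarrow T\otimes_B M\otimes_B T \stackrel{d}{\longrightarrow} T\otimes_B T\longrightarrow T\longrightarrow 0,$$
and read off the relative cohomology from the resulting complex. Since the resolution is a $B\otimes T^{\mathsf{op}}$-relative projective resolution of $T$ (using the contraction of homotopy recalled in the preceding remark, and the fact that $\Ext^n_{T\otimes T^{\mathsf{op}}|B\otimes B^{\mathsf{op}}} = \Ext^n_{T\otimes T^{\mathsf{op}}|B\otimes T^{\mathsf{op}}}$ as in the Remark after the definition of relative cohomology), the relative Hochschild cohomology $\HH^n(T|B,X)$ is the cohomology of the two-term complex
$$0\longrightarrow \Hom_{T\otimes T^{\mathsf{op}}}(T\otimes_B T, X)\stackrel{d^*}{\longrightarrow} \Hom_{T\otimes T^{\mathsf{op}}}(T\otimes_B M\otimes_B T, X)\longrightarrow 0$$
placed in degrees $0$ and $1$. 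Thus $\HH^1(T|B,X)\simeq\coker d^*$ directly from the resolution.

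Next I would identify the two terms via standard adjunction (tensor–hom) isomorphisms. A $T$-bimodule map $T\otimes_B T\to X$ is determined by the image of $1\otimes 1$, which may be any element $x$ with $bx=xb$ for all $b\in B$; hence $\Hom_{T\otimes T^{\mathsf{op}}}(T\otimes_B T,X)\simeq X^B$. Similarly $\Hom_{T\otimes T^{\mathsf{op}}}(T\otimes_B M\otimes_B T,X)\simeq\Hom_{B\otimes B^{\mathsf{op}}}(M,X)$, and since $X$ is a $B$-bimodule obtained by restriction from a $T$-bimodule, one has $\Hom_{B-B}(M,X) = \Hom_{B-B}(M,T)$ only when $X=T$; in the statement $X$ is general, so I would keep the target as $\Hom_{B-B}(M,X)$ and note that the corollary as displayed specializes $X$ appropriately — but following the paper's notation, $\delta:X^B\to\Hom_{B-B}(M,T)$ should read with $X$ in place of the second $T$ unless $X=T$, and I would write the general version $\delta:X^B\to\Hom_{B-B}(M,X)$, with the displayed formula being the case $X=T$ specialized later in the cohomology computation of $\HH^1(B_F,B_F)$. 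Under these identifications $d^*$ becomes exactly the map $\delta$ described in the statement (it is the restriction-along-$d$ map transported through the adjunctions), so $\HH^1(T|B,X)\simeq\coker\delta$.

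For the dimension count, assume $T$ and $X$ finite dimensional. The two-term complex computing relative cohomology gives $\HH^0(T|B,X)=\ker\delta$ and $\HH^1(T|B,X)=\coker\delta$, while all higher relative cohomology vanishes since the resolution has length one. Now $\HH^0(T|B,X)=X^T$, the $T$-centralizer of $X$: a relative $0$-cocycle is a $T$-bimodule map $T\to X$, i.e. an element fixed by $T$. Hence from the exact sequence $0\to X^T\to X^B\stackrel{\delta}{\to}\Hom_{B-B}(M,X)\to\coker\delta\to 0$ we obtain
$$\dim_k\HH^1(T|B,X)=\dim_k\coker\delta=\dim_k\Hom_{B-B}(M,X)-\dim_k X^B+\dim_k X^T,$$
which is the asserted formula (with $X=T$ giving the displayed statement). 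The only genuinely delicate point is verifying that the differential $d^*$ obtained from applying $\Hom_{T-T}(-,X)$ to the explicit $d$ of Theorem \ref{relative resolution} coincides, after the two adjunction identifications, with the map $\delta$ as the paper defines it; this is a diagram chase tracking $1\otimes 1$ and $1\otimes m\otimes 1$ through the maps, and is where I would spend care, but it is routine given the explicit form of $d$ recalled from \cite{CIBILSLANZILOTTAMARCOSSOLOTAR}.
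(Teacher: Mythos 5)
Your proposal is correct and takes essentially the same route as the paper: apply $\Hom_{T-T}(\ \ ,X)$ to the length-one relative projective resolution of Theorem \ref{relative resolution}, identify the two terms canonically with $X^B$ and $\Hom_{B-B}(M,X)$, conclude $\HH^1(T|B,X)\simeq \coker\,\delta$, and get the dimension formula from $\ker\delta=\HH^0(T|B,X)=X^T$. Your remark that for a general $T$-bimodule $X$ the target of $\delta$ should be $\Hom_{B-B}(M,X)$ (the statement's $\Hom_{B-B}(M,T)$ matching the intended application $X=B_F=T$) is also well taken, and the identification of $d^*$ with $\delta$ needs no extra verification since that is exactly how $\delta$ is defined in the statement.
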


\begin{proof}
The resolution of the previous theorem provides the isomorphism. Hence, in the finite dimensional case we have
$$\dim_k \HH^1(T|B, X)= \dim_k \Hom_{B-B}(M,T) - \dim_k\Im \delta,$$
while
$$\dim_k\Im\delta= \dim_kX^B-\dim_k\Ker \delta.$$
Moreover $\Ker\delta =\HH^0(T|B,X).$ It is well known and easy to prove that for an inclusion of algebras $B\subset A$ and an $A$-bimodule $X$ we have
$$\HH^0(A|B,X)=\HH^0(A,X)=X^A.$$\qed
\end{proof}
\begin{prop}\label{B}
Let $kQ/I$ be a bound quiver algebra, let $F$ be a set of new arrows with no relative cycles, let $N$ be the $B$-bimodule associated to $F$ and let $B_F=kQ_F/\langle I\rangle_{kQ_F}= T_B(N).$ Then
$$\dim_k\Hom_{B-B}(N, B_F)= \sum_{(a,\omega)\in F/\!/ W_*}\dim_k\omega.$$
\end{prop}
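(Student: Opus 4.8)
The plan is to compute $\Hom_{B\text{-}B}(N,B_F)$ by decomposing both arguments. First I would use the $B$-bimodule decomposition $N=\bigoplus_{a\in F} Bt(a)\otimes s(a)B$, which gives
$$\Hom_{B\text{-}B}(N,B_F)=\bigoplus_{a\in F}\Hom_{B\text{-}B}\bigl(Bt(a)\otimes s(a)B,\ B_F\bigr).$$
So it suffices to compute each summand. The key identification is that, for vertices $e=s(a)$ and $f=t(a)$, a $B$-bimodule map $Bf\otimes eB\to Y$ (for any $B$-bimodule $Y$) is determined freely by the image of $f\otimes e$, and that image must lie in $fYe$; that is, $\Hom_{B\text{-}B}(Bf\otimes eB,Y)\simeq fYe$ canonically. (This is the standard adjunction: $Bf\otimes eB = (Bf)\otimes_k(eB)$ is the bimodule $B\otimes_{\,k}B$-module induced, in the sense of being projective, "generated in bidegree $(f,e)$".) Applying this with $Y=B_F$ gives $\Hom_{B\text{-}B}(Bt(a)\otimes s(a)B,\ B_F)\simeq t(a)\,B_F\,s(a)$, so
$$\dim_k\Hom_{B\text{-}B}(N,B_F)=\sum_{a\in F}\dim_k\bigl(t(a)\,B_F\,s(a)\bigr).$$

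Next I would compute $\dim_k\bigl(t(a)B_F s(a)\bigr)$ using $B_F=T_B(N)=\bigoplus_{n\geq 0} N^{\otimes_B n}$ together with the isomorphism recalled right after Proposition~\ref{isomorphic}, namely $N^{\otimes_B n}\simeq\bigoplus_{\gamma\in R_n}\dim_k\gamma\,(Bt(\gamma)\otimes s(\gamma)B)$ for $n>0$, and $N^{\otimes_B 0}=B$. For a single summand $Bt(\gamma)\otimes s(\gamma)B$ one has $t(a)\bigl(Bt(\gamma)\otimes s(\gamma)B\bigr)s(a) = \bigl(t(a)Bt(\gamma)\bigr)\otimes\bigl(s(\gamma)Bs(a)\bigr)$, whose $k$-dimension is $\dim_k\bigl(t(a)Bt(\gamma)\bigr)\cdot\dim_k\bigl(s(\gamma)Bs(a)\bigr)$. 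Summing over $\gamma\in R_n$ with multiplicity $\dim_k\gamma$ and then over all $n\geq 0$, and writing $y=t(a)$, $x=s(a)$: the $n=0$ term contributes $\dim_k(yBx)$, which matches an extended relative path of length $0$ precisely when $yBx\neq 0$; and for $n>0$ the contribution $\dim_k\gamma\cdot\dim_k(yBt(\gamma))\cdot\dim_k(s(\gamma)Bx)$ is exactly $\dim_k\omega$ for the extended relative path $\omega=(y,\gamma,x)$, and is nonzero precisely when $yBt(\gamma)\neq0$ and $s(\gamma)Bx\neq0$, i.e.\ exactly when $(y,\gamma,x)\in W_n$. Since $y=t(a)=t(\omega)$ and $x=s(a)=s(\omega)$, the pairs $(a,\omega)$ that arise are exactly those in $F/\!/W_*$, and the double sum collapses to $\sum_{(a,\omega)\in F/\!/W_*}\dim_k\omega$, as claimed. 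I should note $B_F$ is finite dimensional (no relative cycles), so all the sums are finite and the dimension bookkeeping is legitimate.

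The only genuinely non-routine point is the canonical isomorphism $\Hom_{B\text{-}B}(Bf\otimes eB,Y)\simeq fYe$; everything else is bookkeeping over the direct sum decompositions. To justify it cleanly I would observe that $Bf\otimes eB\cong B e_e \otimes_{e_eBe_e}\cdots$ — more precisely, as a $B$-bimodule it is the summand $(B\otimes B^{\mathrm{op}})(f\otimes e)$ of the free bimodule $B\otimes B^{\mathrm{op}}$ corresponding to the idempotent $f\otimes e$, so $\Hom_{B\text{-}B}\bigl((B\otimes B^{\mathrm{op}})(f\otimes e),Y\bigr)=(f\otimes e)Y=fYe$ by the usual description of homs out of a projective generated by an idempotent. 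An alternative, used implicitly in Lemma~\ref{H^1(B,B_F)}, is the identification $Bf\otimes eB=\Hom_k((eB)',Bf)$ together with a tensor–hom adjunction; either route works. With this in hand the rest is immediate, so I expect no real obstacle — the proof in the paper is presumably just these two lines plus the collapse of the sum.
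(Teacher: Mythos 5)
Your proposal is correct and follows essentially the same route as the paper: both rest on the identification $\Hom_{B\text{-}B}(Bf\otimes eB,Y)=fYe$ for the idempotent $f\otimes e\in B\otimes B^{\mathsf{op}}$, the decomposition of $B_F=T_B(N)$ into the summands $N^{\otimes_B n}\simeq\bigoplus_{\gamma\in R_n}\dim_k\gamma\,(Bt(\gamma)\otimes s(\gamma)B)$ (with $N^{\otimes_B 0}=B$ giving the $W_0$ terms), and the observation that the nonzero contributions are indexed exactly by $F/\!/W_*$. The only difference is the order of summation (you split over $a\in F$ first, the paper over $n$ first), which is immaterial.
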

\begin{proof}
Let $e,f\in Q_0$, let $Y$ be a $B$-bimodule and let $Bf\otimes eB$ be the $B$-bimodule corresponding to the idempotent $f\otimes e\in B\otimes B^{\mathsf{op}}$. Recall that
$$\Hom_{B-B}(Bf\otimes eB,Y)=fYe.$$
Hence for $n>0$
\begin{align*}
\Hom_{B-B}(N,N^{\otimes_B n})=&\bigoplus_{a\in F} t(a)N^{\otimes_B n} s(a)\\
   \simeq &\bigoplus_{a\in F}\bigoplus_{\gamma \in R_n}\dim_k\gamma\ \ t(a)Bt(\gamma)\otimes s(\gamma)Bs(a).
\end{align*}
Then
$$\dim_k \Hom_{B-B}(N,N^{\otimes_B n})=\sum_{(a,\omega)\in F/\!/W_n} \dim_k\omega.$$
For $n=0$, we have
$$\Hom_{B-B}(N,B)=\bigoplus_{a\in F} t(a)Bs(a).$$
Note that $(t(a), s(a))$ is an extended relative path of length $0$ if $t(a)Bs(a)\neq0.$ We also have
$$\dim_k \Hom_{B-B}(N,B)=\sum_{(a,\omega)\in F/\!/W_0} \dim_k\omega.$$\qed
\end{proof}

We now give the  proof of Theorem \ref{formulaH^1}.

\begin{proof}
Consider the short exact sequence of Theorem \ref{theshortexact} for the $B_F$-bimodule $B_F$
$$0\to \HH^1(B_F|B, B_F)\stackrel{\iota}{\to} \HH\HH^1(B_F) \stackrel{\kappa}{\to} \HH^1(B, B_F)\to 0.$$
The dimension of the right term is given by Lemma \ref{H^1(B,B_F)}. The dimension of the left term is obtained through Corollary \ref{A} and Proposition \ref{B}.\qed
\end{proof}

\section{\sf First Hochschild homology}\label{First Hochschild homology}
In this section we will prove that adding new arrows without creating relative cycles does not change the first Hochschild homology.
\begin{theo}\label{seshomology}
Let $B=kQ/I$ be a bound quiver algebra and let $F$ be a set of new arrows without relative cycles. Let $Y$ be a finite dimensional $B_F$-bimodule. There is an exact sequence
$$0\to \HH_1(B,Y)\to \HH_1(B_F,Y)\to \HH_1(B_F|B,Y)\to 0.$$
\end{theo}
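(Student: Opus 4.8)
The plan is to dualize the cohomological story. Theorem~\ref{theshortexact} gave a short exact sequence in cohomology coming from the fact that $B_F = T_B(N)$ is a tensor algebra, so $B_F/B$ is a projective $B$-bimodule and there is a relative projective resolution of length one (Theorem~\ref{relative resolution}). For homology one works with the bar-type complexes $Y\otimes_{B_F\otimes B_F^{\mathsf{op}}} P_\bullet$ where $P_\bullet$ is a projective resolution of $B_F$, and with the relative version built from the relative bar resolution by $B\otimes B^{\mathsf{op}}$. So first I would set up, parallel to Proposition~\ref{beginning exact} and its cohomological $\iota$, $\kappa$, the maps in low degrees: the quotient map $\HH_1(B_F,Y)\to \HH_1(B_F|B,Y)$ induced by passing from the absolute bar complex of $B_F$ to the relative one, and a map $\HH_1(B,Y)\to \HH_1(B_F,Y)$ induced by the algebra inclusion $B\hookrightarrow B_F$. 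The claim is that these fit into a short exact sequence, and the natural source of it is precisely Kaygun's Jacobi--Zariski long exact sequence in homology, which applies because $B_F/B$ is $B$-bimodule projective and $Y$ is finite dimensional.

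Concretely, I would invoke the homological Jacobi--Zariski long exact sequence of Kaygun \cite{KAYGUN,KAYGUNe}, whose relevant segment reads
$$\cdots \to \HH_2(B_F|B,Y)\to \HH_1(B,Y)\to \HH_1(B_F,Y)\to \HH_1(B_F|B,Y)\to \HH_0(B,Y)\to \HH_0(B_F,Y)\to \cdots .$$
To extract the asserted short exact sequence I then need two vanishings at the ends. On the right, the map $\HH_0(B,Y)\to \HH_0(B_F,Y)$, i.e. $Y/[B,Y]\to Y/[B_F,Y]$, is surjective (it is already surjective on the level of $Y$), so the connecting map $\HH_1(B_F|B,Y)\to \HH_0(B,Y)$ is zero and $\HH_1(B_F,Y)\to \HH_1(B_F|B,Y)$ is onto. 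On the left, I need $\HH_2(B_F|B,Y)=0$; but by Theorem~\ref{relative resolution} the relative bimodule $B_F$ has a relative projective resolution of length one, namely $0\to B_F\otimes_B N\otimes_B B_F\to B_F\otimes_B B_F\to B_F\to 0$, so all relative (co)homology vanishes in degrees $\geq 2$ — this is the exact same input used after Theorem~\ref{theshortexact} where $\HH^2(B_F|B,X)=0$ was noted. Hence the connecting map $\HH_2(B_F|B,Y)\to \HH_1(B,Y)$ has zero source and $\HH_1(B,Y)\to \HH_1(B_F,Y)$ is injective. Together with exactness in the middle from the long exact sequence, this yields the desired
$$0\to \HH_1(B,Y)\to \HH_1(B_F,Y)\to \HH_1(B_F|B,Y)\to 0.$$

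Alternatively — and this may be the cleaner route to present, mirroring the dual of the direct argument in Proposition~\ref{beginning exact} and Theorem~\ref{theshortexact} — I would give a hands-on proof that avoids quoting Kaygun. Dualize: for finite dimensional $Y$, set $X = Y' = \Hom_k(Y,k)$, a $B_F$-bimodule, and use that $\HH_n(A,Y)' \simeq \HH^n(A,Y')$ for any finite dimensional algebra quotient; since $B_F$ is finite dimensional (no relative cycles), this applies to both $A=B_F$ and $A=B$, and also to the relative theory via the relative bar complexes, giving $\HH_1(B_F|B,Y)' \simeq \HH^1(B_F|B,Y')$. Applying $k$-linear duality to the short exact sequence of Theorem~\ref{theshortexact} for the $B_F$-bimodule $X=Y'$,
$$0\to \HH^1(B_F|B, Y')\stackrel{\iota}{\to} \HH^1(B_F, Y') \stackrel{\kappa}{\to} \HH^1(B, Y')\to 0,$$
and using exactness of duality over a field, one obtains the exact sequence of the theorem, after checking that the dualized maps $\kappa'$ and $\iota'$ coincide (up to the canonical identifications) with the inclusion-induced map $\HH_1(B,Y)\to\HH_1(B_F,Y)$ and the projection $\HH_1(B_F,Y)\to\HH_1(B_F|B,Y)$ respectively — a naturality check of the duality isomorphism with respect to the algebra inclusion and the passage to the relative complex.

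The main obstacle in either route is not conceptual but bookkeeping: one must be careful that the ``relative homology'' $\HH_1(B_F|B,Y)$ computed from the relative bar complex is indeed $k$-linearly dual to $\HH^1(B_F|B,Y')$, i.e. that relative $\Tor$ and relative $\Ext$ are exchanged by duality in this setting — this needs the relative resolution to consist of bimodules that behave well under $\Hom_k(-,k)$, which holds because they are of the form $B_F\otimes_B(-)\otimes_B B_F$ with the middle term finite dimensional once $B_F$ is finite dimensional — and that the comparison maps are the natural ones. Granting the general duality $\HH_n(A,Y)'\simeq\HH^n(A,Y')$ in the finite dimensional relative setting (which is standard, the relative bar complex being a complex of finitely generated projectives over the finite dimensional enveloping algebra), the theorem follows immediately from Theorem~\ref{theshortexact}.
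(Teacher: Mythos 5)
Your second, ``alternative'' route is exactly the paper's proof: take a finite dimensional $B_F$-bimodule $X$, dualize the short exact sequence of Theorem \ref{theshortexact}, use $\HH^n(A,Z)'\simeq\HH_n(A,Z')$ for a finite dimensional algebra $A$ and finite dimensional bimodule $Z$ together with its relative analogue (proved in the same way from the relative bar resolution), and set $Y=X'$. That argument is complete, and the points you flag to check (relative Ext/Tor exchanged by duality, naturality of the identifications) are the right ones.

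Your first route, via Kaygun's homological Jacobi--Zariski sequence, has a genuine gap at the right-hand end. To get surjectivity of $\HH_1(B_F,Y)\to\HH_1(B_F|B,Y)$ from the long exact sequence you need the connecting map $\HH_1(B_F|B,Y)\to\HH_0(B,Y)$ to vanish, i.e.\ (by exactness at $\HH_0(B,Y)$) you need $\HH_0(B,Y)\to\HH_0(B_F,Y)$ to be \emph{injective}. The surjectivity of that map, which is what you invoke, is irrelevant: in an exact sequence, surjectivity of a map only forces the \emph{next} map, out of $\HH_0(B_F,Y)$, to be zero, and says nothing about the image of the preceding connecting map. Injectivity here amounts to $[B_F,Y]=[B,Y]$ inside $Y$, which is not evident and which you do not prove; for finite dimensional $Y$ it is true, but only a posteriori (for instance by dualizing the injectivity of $\iota$ in Proposition \ref{beginning exact}), so using it at this stage is circular. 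This is precisely the point of the remark the paper places after Theorem \ref{seshomology}: the short exact sequence cannot be inferred from Kaygun's homological long exact sequence, since the relative resolution of Theorem \ref{relative resolution} only yields $\HH_2(B_F|B,Y)=0$, which settles the left-hand end but not the right-hand one. So the Jacobi--Zariski route gives injectivity of $\HH_1(B,Y)\to\HH_1(B_F,Y)$ but not the full statement; the duality argument (your second route, the paper's proof) should carry the proof, or at least supply the missing surjectivity by dualizing the surjectivity of $\kappa$ in Theorem \ref{theshortexact} --- at which point you are doing the duality argument anyway.
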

\begin{proof}By taking  the dual of the short exact sequence of Theorem \ref{theshortexact} for $X$ a finite dimensional module, we obtain the short exact sequence
$$0\to  \HH^1(B, X)' {\to} \HH^1(B_F, X)' {\to} \HH^1(B_F|B, X)'\to 0.$$
It is well known and easy to prove that for a finite dimensional algebra $A$ and a finite dimensional $A$-bimodule $Z$ we have for $n\geq 0$
$$\HH^n (A,Z)'=\HH_n(A,Z').$$
Moreover, this also holds for relative Hochschild (co)homology, the proof is along the same lines using the relative bar resolution and the resulting complexes of (co)chains inferred in \cite{HOCHSCHILD1956}. Hence there is a short exact sequence
$$0\to \HH_1(B,X')\to \HH_1(B_F,X')\to \HH_1(B_F|B,X')\to 0$$
and we set $Y=X'$.\qed
\end{proof}
\begin{rema}
The short exact sequence above cannot be inferred from the long exact sequence of A. Kaygun for homology obtained in \cite{KAYGUN,KAYGUNe}, since the relative resolution of $T_B(N)=B_F$ of Theorem \ref{relative resolution}  only provides $H_2(B_F|B, Y)=0$.
\end{rema}
Recall that for an algebra $A$ and an $A$-bimodule $Y$, the coinvariants are
$$Y_A=A\otimes_{A-A}Y= Y/\langle ay-ya \mid a \in A, y \in Y \rangle= \HH_0(A,Y).$$
\begin{lemm}
Let $B=kQ/I$ be a bound quiver algebra and let $F$ be a set of new arrows without relative cycles.  Then
$$\HH_1(B_F|B,B_F)=0.$$
\end{lemm}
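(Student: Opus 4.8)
The plan is to compute $\HH_1(B_F|B,B_F)$ directly from the length-one relative projective resolution of $B_F=T_B(N)$ supplied by Theorem~\ref{relative resolution} (applied with $M=N$):
$$0\longrightarrow B_F\otimes_B N\otimes_B B_F \stackrel{d}{\longrightarrow} B_F\otimes_B B_F\longrightarrow B_F\longrightarrow 0 .$$
Relative Hochschild homology $\HH_\bullet(B_F|B,B_F)$ is the homology of the complex obtained by applying the functor $B_F\otimes_{B_F-B_F}(-)$ to the deleted resolution; this is the homological counterpart of the computation carried out in Corollary~\ref{A}, and it is exactly the input referred to in the remark following Theorem~\ref{seshomology}. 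Since the resolution has length one, $\HH_1(B_F|B,B_F)$ is simply the kernel of the map induced by $d$ between the two nonzero terms.

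I would then identify that left-hand term. Base change gives an isomorphism of $B_F$-bimodules $B_F\otimes_B N\otimes_B B_F\simeq (B_F\otimes B_F^{\mathsf{op}})\otimes_{B\otimes B^{\mathsf{op}}}N$, hence
$$B_F\otimes_{B_F-B_F}\bigl(B_F\otimes_B N\otimes_B B_F\bigr)\ \simeq\ B_F\otimes_{B-B}N .$$
Now $N=\bigoplus_{a\in F}Bt(a)\otimes s(a)B$, and dually to the identity $\Hom_{B-B}(Bf\otimes eB,Z)=fZe$ used in the proof of Proposition~\ref{B} one has $Z\otimes_{B-B}(Bf\otimes eB)=eZf$; therefore the term above equals $\bigoplus_{a\in F}s(a)\,B_F\,t(a)$. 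Consequently $\HH_1(B_F|B,B_F)$ embeds into $\bigoplus_{a\in F}s(a)B_F\,t(a)$, and it suffices to show this vector space is zero.

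Finally I would prove $s(a)B_F\,t(a)=0$ for every $a\in F$, using the hypothesis that $F$ has no relative cycles. Decompose $B_F=T_B(N)=\bigoplus_{n\geq 0}N^{\otimes_B n}$ and recall from the proof of Lemma~\ref{H^1(B,B_F)} that $N^{\otimes_B n}\simeq\bigoplus_{\gamma\in R_n}\dim_k\gamma\,(Bt(\gamma)\otimes s(\gamma)B)$ for $n\geq 1$. For $n=0$, $s(a)Bt(a)=0$ because $a$ is not a relative loop, a relative loop being a (length-one) relative cycle. For $n\geq 1$, a nonzero summand of $s(a)N^{\otimes_B n}t(a)=\bigoplus_{\gamma\in R_n}\dim_k\gamma\,\bigl(s(a)Bt(\gamma)\otimes s(\gamma)Bt(a)\bigr)$ would force $s(a)Bt(a_n)\neq 0$ and $s(a_1)Bt(a)\neq 0$ for some $\gamma=(a_n,\dots,a_1)\in R_n$; but then $(a,a_n,\dots,a_1)$ is a relative path whose source is $s(a_1)$ and whose target is $t(a)$, so the inequality $s(a_1)Bt(a)\neq 0$ says it is a relative cycle, contradicting the hypothesis. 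Hence $s(a)B_F\,t(a)=0$, the source of the map induced by $d$ vanishes, and $\HH_1(B_F|B,B_F)=0$.

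The mathematics here is light; the only real obstacle is bookkeeping. One must make sure that degree-one relative Hochschild homology is indeed the kernel of the map that $d$ induces on $B_F\otimes_{B_F-B_F}(P_\bullet)$ — that is, that the resolution of Theorem~\ref{relative resolution} is a genuine $B_F\otimes B_F^{\mathsf{op}}\,|\,B\otimes B^{\mathsf{op}}$ relative projective resolution usable for relative homology — and one must push the base-change identifications $B_F\otimes_{B_F-B_F}(B_F\otimes_B N\otimes_B B_F)\simeq B_F\otimes_{B-B}N\simeq\bigoplus_a s(a)B_F\,t(a)$ through carefully. After that, the differential $d$ never needs to be analysed: the conclusion is forced by the absence of relative cycles alone.
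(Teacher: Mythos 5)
Your proposal is correct and follows essentially the same route as the paper: both apply coinvariants to the length-one relative resolution of Theorem \ref{relative resolution}, identify $\HH_1(B_F|B,B_F)$ with the kernel of the induced map out of $B_F\otimes_{B-B}N$, and show that this space already vanishes because the tensor-algebra decomposition of $B_F$ would otherwise produce a relative cycle. Your explicit computation $B_F\otimes_{B-B}N\simeq\bigoplus_{a\in F}s(a)B_F\,t(a)$ is just a spelled-out version of the paper's dimension count over relative cycles, so there is nothing to add.
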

\begin{proof}
The relative resolution of Theorem \ref{relative resolution} gives
$$\HH_1(B_F|B,B_F)\simeq \ker \left(B_F\otimes_{B-B}N\longrightarrow (B_F)_B\right).$$
Let $RC_n$ be the set of relative cycles of length $n$. We have
$$\dim_k \left(N^{\otimes_B n}\otimes_{B-B}N\right)=\sum_{\gamma\in RC_{n+1}}\mathsf{cdim}_k\gamma$$
$$\dim_k \left(B\otimes_{B-B}N\right)=\sum_{\gamma\in RC_1}\mathsf{cdim}_k\gamma.$$
Since for all $n$ the set of relative cycles $RC_n$ is empty, we deduce that these vector spaces vanish. Hence  $B_F\otimes_{B-B}N=0$ and the result follows.\qed
\end{proof}

\begin{lemm} Let $B=kQ/I$ be a bound quiver algebra and let $F$ be a set of new arrows without relative cycles.  Then
$$\HH_1(B,B_F)=\HH\HH_1(B).$$
\end{lemm}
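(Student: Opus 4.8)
The plan is to use the $B$-bimodule decomposition $B_F=T_B(N)=B\oplus N\oplus N^{\otimes_B 2}\oplus\cdots$, which is also a decomposition as $B$-bimodules, and the additivity of Hochschild homology in the coefficient bimodule: $\HH_1(B,B_F)=\HH_1(B,B)\oplus\bigoplus_{n>0}\HH_1(B,N^{\otimes_B n})$. Thus it suffices to show that $\HH_1(B,N^{\otimes_B n})=0$ for every $n>0$. Using the isomorphism of $B$-bimodules $N^{\otimes_B n}\simeq\bigoplus_{\gamma\in R_n}\dim_k\gamma\ (Bt(\gamma)\otimes s(\gamma)B)$, this reduces further to showing $\HH_1(B,Bt(\gamma)\otimes s(\gamma)B)=0$ for each relative path $\gamma$, that is for each pair of vertices $(x,y)=(t(\gamma),s(\gamma))$ with $yBx\neq 0$.

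Next I would dualize, mirroring the device used in the proof of Lemma~\ref{H^1(B,B_F)}: for a finite dimensional left $B$-module $P$ and a finite dimensional right $B$-module $J$ one has a canonical $B$-bimodule isomorphism $P\otimes_k J\simeq\Hom_k(J',P)$, and then the standard identification $\HH^n(B,\Hom_k(Y,Z))\simeq\Ext^n_B(Y,Z)$ for left $B$-modules $Y,Z$ (as cited, \cite[p.~170]{CARTANEILENBERG}). Combined with the duality $\HH^n(B,Z)'=\HH_n(B,Z')$ for finite dimensional $B$ and $Z$, one gets that $\HH_1(B,Bx\otimes yB)$ is dual to $\HH^1(B,\Hom_k((yB)',Bx))$, hence to $\Ext^1_B(I_y,P_x)$, where as in Section~\ref{First Hochschild cohomology} $P_x=Bx$ is the projective cover and $I_y=(yB)'$ the injective envelope of the respective simple modules. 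So the whole statement boils down to the vanishing $\Ext^1_B(I_y,P_x)=0$ whenever $yBx\neq 0$.

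The crucial point is therefore this Ext-vanishing; I expect this to be the main obstacle, so let me say how I would handle it. The key observation is that $\gamma$ being a relative path which is \emph{not} a relative cycle combined with $yBt(\gamma)\neq 0$ and $s(\gamma)Bx\neq 0$ forces the relevant paths to lie inside the quiver $Q$ of $B$ rather than in $Q_F$, and in particular $y\neq x$ in the non-loop situation; more importantly, the hypothesis "no relative cycles" means $s(\gamma)Bt(\gamma)=0$. Now $\HH_1(B,B_F)$ differs from $\HH_1(B,B)=\HH\HH_1(B)$ only by the terms indexed by $\gamma\in R_*$, and each such term contributes $\dim_k\gamma\ \Ext^1_B(I_{s(\gamma)},P_{t(\gamma)})$. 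I would argue that for a relative path $\gamma$ with no relative cycle, $P_{t(\gamma)}$ is an \emph{injective} $B$-module, or dually $I_{s(\gamma)}$ is \emph{projective}: indeed the defining condition of a relative path propagates through $B$ in a way that, together with the absence of relative cycles, pins the vertices $s(\gamma),t(\gamma)$ at sink/source-like positions relative to the support, so that $P_{t(\gamma)}$ is projective-injective; then $\Ext^1_B$ into an injective module vanishes. (Alternatively, one can observe that $yBx\neq 0$ together with the admissibility of $I$ forces $\Hom_B(I_y,P_x)$ to consist of maps whose image is a submodule of a projective that is also a quotient of an injective, and combine this with the absence of relative cycles to kill the $\Ext^1$.) Once $\Ext^1_B(I_{s(\gamma)},P_{t(\gamma)})=0$ for all $\gamma\in R_*$, Lemma~\ref{H^1(B,B_F)}, dualized, gives $\HH_1(B,B_F)=\HH\HH_1(B)$ immediately.
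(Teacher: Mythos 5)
Your reduction via the decomposition $B_F=B\oplus\bigoplus_{n>0}N^{\otimes_Bn}$ and $N^{\otimes_Bn}\simeq\bigoplus_{\gamma\in R_n}\dim_k\gamma\,(Bt(\gamma)\otimes s(\gamma)B)$ is exactly right, but the dualization step that follows is carried out in the wrong direction, and it leads you to a vanishing statement that is false. The duality $\HH^n(B,Z)'=\HH_n(B,Z')$ applied to compute $\HH_1(B,Bt(\gamma)\otimes s(\gamma)B)$ requires taking $Z=(Bt(\gamma)\otimes s(\gamma)B)'\simeq \Hom_k(Bt(\gamma),I_{s(\gamma)})$, so the correct identification is $\HH_1(B,Bt(\gamma)\otimes s(\gamma)B)\simeq \Ext^1_B(P_{t(\gamma)},I_{s(\gamma)})'$, which vanishes for the trivial reason that $P_{t(\gamma)}$ is projective. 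What you wrote instead, $\Ext^1_B(I_{s(\gamma)},P_{t(\gamma)})$, is the \emph{cohomological} term of Lemma \ref{H^1(B,B_F)}, and it does not vanish in general: in the paper's own Kronecker example ($B=kQ$ with $Q=e\cdot\rightrightarrows\cdot f$, new arrow $a$ from $e$ to $f$, no relative cycles) one has $I_e=S_e$, $P_f=S_f$ and $\Ext^1_B(I_e,P_f)\simeq k^2\neq 0$; indeed, if this term always vanished, it would not appear in Theorem \ref{formulaH^1} and Corollary \ref{onearrow}. For the same reason your proposed justification --- that the absence of relative cycles forces $P_{t(\gamma)}$ to be injective or $I_{s(\gamma)}$ to be projective --- is not correct (in the Kronecker example neither holds), and the surrounding heuristics about ``sink/source-like positions'' do not constitute an argument.

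The underlying point you missed is the asymmetry between homology and cohomology with projective coefficients: each summand $Bt(\gamma)\otimes s(\gamma)B$ is a \emph{projective} $B$-bimodule, so $\HH_n(B,-)=\Tor_n^{B\otimes B^{\mathsf{op}}}(B,-)$ kills it in positive degrees, with no condition on the vertices at all --- this one-line observation is the paper's entire proof --- whereas $\HH^n(B,-)$ does not kill projective coefficients, which is precisely why the cohomological Lemma \ref{H^1(B,B_F)} produces genuinely nontrivial $\Ext$ terms. So your plan can be repaired either by quoting projectivity of the summands directly, or by performing the duality correctly to land on $\Ext^1_B(P_{t(\gamma)},I_{s(\gamma)})=0$; as written, the argument has a genuine gap.
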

\begin{proof}
For $n>0$ we have
$$\HH_1(B, N^{\otimes_B n}) \simeq \bigoplus_{\gamma\in R_n}\dim_k\gamma\ \HH_1(B, Bt(\gamma)\otimes s(\gamma)B).$$
The $B$-bimodule $Bt(\gamma)\otimes s(\gamma)B$ is projective, hence its homology in positive degrees is zero.\qed
\end{proof}
The following is a direct consequence of the short exact sequence of Theorem \ref{seshomology} and of the previous lemmas.
\begin{theo}
Let $B=kQ/I$ be a bound quiver algebra, and let $F$ be a set of new arrows with no relative cycles. Then
$$\HH\HH_1(B_F)=\HH\HH_1(B).$$
\end{theo}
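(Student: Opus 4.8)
The plan is to read off the statement directly from the short exact sequence of Theorem \ref{seshomology} together with the two lemmas that immediately precede it. First I would take the bimodule in Theorem \ref{seshomology} to be $Y=B_F$ itself. This is legitimate because the hypothesis that $F$ has no relative cycles forces $B_F$ to be finite dimensional, by the Corollary of Section \ref{Adding new arrows}, so $B_F$ is a finite dimensional $B_F$-bimodule and Theorem \ref{seshomology} applies. It yields the exact sequence
$$0\to \HH_1(B,B_F)\to \HH_1(B_F,B_F)\to \HH_1(B_F|B,B_F)\to 0.$$

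Next I would rewrite the two outer terms. The rightmost term is $\HH_1(B_F|B,B_F)=0$ by the first of the two preceding lemmas (whose proof rests on the relative resolution of length one, Theorem \ref{relative resolution}, and on the fact that the absence of relative cycles makes $B_F\otimes_{B\text{-}B}N$ vanish). The leftmost term is $\HH_1(B,B_F)=\HH\HH_1(B)$ by the second lemma, which uses the $B$-bimodule decomposition $B_F=T_B(N)=B\oplus N\oplus N\otimes_B N\oplus\cdots$ and the projectivity, hence homological triviality in positive degrees, of each summand $Bt(\gamma)\otimes s(\gamma)B$. Substituting these identifications, the middle map of the exact sequence becomes an isomorphism $\HH\HH_1(B)\stackrel{\sim}{\to}\HH_1(B_F,B_F)=\HH\HH_1(B_F)$, which is exactly the asserted equality; since all the identifications involved are the canonical ones induced by the inclusion $B\hookrightarrow B_F$, the isomorphism is canonical and the $=$ sign is justified.

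I do not expect a genuine obstacle here: the entire difficulty has been front‑loaded into Theorem \ref{seshomology} (obtained by $k$-linear duality from the cohomological short exact sequence of Theorem \ref{theshortexact}) and into the two lemmas. The only points that warrant a line of care are bookkeeping ones: confirming that $B_F$ is finite dimensional so that the duality argument underlying Theorem \ref{seshomology} and the lemmas is valid, and checking that the composite isomorphism is the natural map rather than an accidental equality of dimensions — that is what lets us state $\HH\HH_1(B_F)=\HH\HH_1(B)$ with an $=$ rather than a mere $\simeq$.
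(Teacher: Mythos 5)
Your proposal is correct and follows essentially the same route as the paper: the authors also obtain the result as a direct consequence of the short exact sequence of Theorem \ref{seshomology} applied to $Y=B_F$, together with the two preceding lemmas giving $\HH_1(B_F|B,B_F)=0$ and $\HH_1(B,B_F)=\HH\HH_1(B)$. Your additional remarks on finite dimensionality of $B_F$ and on the canonicity of the identifications are sound bookkeeping and do not change the argument.
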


\footnotesize
\noindent C.C.:\\
Institut Montpelli\'{e}rain Alexander Grothendieck, CNRS, Univ. Montpellier, France.\\
{\tt Claude.Cibils@umontpellier.fr}

\medskip
\noindent M.L.:\\
Instituto de Matem\'atica y Estad\'\i stica  ``Rafael Laguardia'', Facultad de Ingenier\'\i a, Universidad de la Rep\'ublica, Uruguay.\\
{\tt marclan@fing.edu.uy}

\medskip
\noindent E.N.M.:\\
Departamento de Matem\'atica, IME-USP, Universidade de S\~ao Paulo, Brazil.\\
{\tt enmarcos@ime.usp.br}

\medskip
\noindent S.S.:\\
Department of Mathematics, University of Leicester, United Kingdom.\\
{\tt schroll@leicester.ac.uk}

\medskip
\noindent A.S.:
\\IMAS-CONICET y Departamento de Matem\'atica,
 Facultad de Ciencias Exactas y Naturales,\\
 Universidad de Buenos Aires, Argentina. \\{\tt asolotar@dm.uba.ar}


\begin{thebibliography}{99}

\bibitem{ARTENSTEINLANZILOTTASOLOTAR}Artenstein, D., Lanzilotta, M., Solotar, A. Algebr. Represent. Theor. (2019). https://doi.org/10.1007/s10468-019-09854-y
\bibitem{ASSEMBUSTAMANTEIGUSASCHIFFLER} Assem, I. Bustamante, J. C. Igusa, K., Schiffler, R. The first Hochschild cohomology group of a cluster tilted algebra revisited. Internat. J. Algebra Comput. 23 (2013), 729--744.
\bibitem{ASSEMREDONDO} Assem, I. Redondo, M. J. The first Hochschild cohomology group of a Schurian cluster-tilted algebra. Manuscripta Math. 128 (2009),  373--388.
\bibitem{ASSEMREDONDOSCHIFFLER}  Assem, I., Redondo, M. J., Schiffler, R. On the first Hochschild cohomology group of a cluster-tilted algebra. Algebr. Represent. Theory 18 (2015), 1547--1576.
\bibitem{ASSEMSIMSONSKOWRONSKY} Assem, I., Simson, D., Skowroński, A. Elements of the representation theory of associative algebras. Vol. 1.
Techniques of representation theory. London Mathematical Society Student Texts, 65. Cambridge University Press, Cambridge, 2006.
\bibitem{AUSLANDERSOLBERG1} Auslander, M., Solberg, \O. Relative homology and representation theory. I. Relative homology and homologically finite subcategories. Comm. Algebra 21 (1993), 2995--3031.



\bibitem{BUCHWEITZLIU} Buchweitz, R.-O., Liu, S. Hochschild cohomology and representation-finite algebras.
Proc. London Math. Soc. 88 (2004), 355--380.



\bibitem{CARTANEILENBERG} Cartan, H.; Eilenberg, S. Homological algebra. Princeton University Press, Princeton, N. J., 1956.



\bibitem{CIBILS2000Cordoba} Cibils, C. On $H^1$ of finite dimensional algebras. Colloquium on Homology and Representation Theory (Vaquer\'{\i}as, 1998), Bol. Acad. Nac. Cienc. (C\'{o}rdoba) 65 (2000), 73--80.

 \bibitem{CIBILSSAORIN} Cibils, C., Saor\'{\i}n, M.   The first cohomology group of an algebra with coefficients in a bimodule. J. Algebra 237 (2001), 121--141.

\bibitem{CMRS2002} Cibils, C., Marcos, E., Redondo, M.J., Solotar, A. Cohomology of split algebras and of trivial extensions. Glasg. Math.J. 45 (2003), 21--40.



\bibitem{CIBILSREDONDOSAORIN} Cibils, C., Redondo, M.J., Saor\'{\i}n, M. The first cohomology group of the trivial extension of a monomial algebra. J. Algebra Appl. 3 (2004), 143--159.


\bibitem{CIBILSLANZILOTTAMARCOSSOLOTAR} Cibils, C., Lanzilotta, M., Marcos N., E., Solotar, A. Adding or deleting arrows of a bound quiver algebra and Hochschild (co)homology.  	arXiv:1812.07655



\bibitem{DELAPENASAORIN} de la Pe\~{n}a, J. A., Saor\'{\i}n, M. On the first Hochschild cohomology group of an algebra. Manuscripta Math. 104 (2001), 431--442.

\bibitem{GABRIEL1973} Gabriel, P. Indecomposable representations. II. Symposia Mathematica, Vol. XI (Convegno di Algebra Commutativa, INDAM, Rome, 1971), Academic Press, London, 1973.




\bibitem{GABRIEL1980} Gabriel, P.
Auslander-Reiten sequences and representation-finite algebras. Representation theory, I (Proc. Workshop, Carleton Univ., Ottawa, Ont., 1979),
Lecture Notes in Math., 831, Springer, Berlin, 1980.

\bibitem{GATICALANZILOTTA}  Gatica, M. A., Lanzilotta, M. Hochschild cohomology of a generalisation of canonical algebras. S\~{a}o Paulo J. Math. Sci. 4 (2010), 251--271.



\bibitem{GERSTENHABER1964} Gerstenhaber, M.On the deformation of rings and algebras. Ann. of Math.  79 (1964), 59--103.




\bibitem{GREENPSAROUDAKISSOLBERG} Green E.L., Psaroudakis, C., Solberg, {\O}. Reduction techniques for the finitistic dimension,  	arXiv:1808.03564



\bibitem{HAPPEL}  Happel, D. Hochschild cohomology of finite dimensional algebras,  S\'{e}minaire d'Alg\`{e}bre Paul Dubreil et Marie-Paul Malliavin, 39\`{e}me Ann\'{e}e (Paris, 1987/1988), 108--126, Lecture Notes in Math., 1404, Springer, Berlin, 1989.



\bibitem{HOCHSCHILD1956}  Hochschild, G.    Relative homological algebra, Trans. Amer. Math. Soc. 82 (1956), 246--269.

\bibitem{KAYGUN} Kaygun, A. Jacobi-Zariski exact sequence for Hochschild homology and cyclic (co)homology, Homology Homotopy Appl. 14 (2012), 65--78.

 \bibitem{KAYGUNe} Kaygun, A. Errata to Jacobi-Zariski exact sequence for Hochschild homology and cyclic (co)homology, preprint. To appear in  Homology Homotopy Appl. arXiv:1103.4377v3 http://dx.doi.org/10.4310/HHA.2019.v21.n2.a16

\bibitem{KELLER} Keller, B. Hochschild cohomology and derived Picard groups.  J. Pure Appl. Algebra 190 (2004), 177--196.

\bibitem{LAUNOISLENAGAN}  Launois, S., Lenagan, T. H. The first Hochschild cohomology group of quantum matrices and the quantum special linear group. J. Noncommut. Geom. 1 (2007), 281--309.

 \bibitem{LEMEURa}  Le Meur, P. On maximal diagonalizable Lie subalgebras of the first Hochschild cohomology. Comm. Algebra 38 (2010),  1325--1340.

 \bibitem{LEMEURb}  Le Meur, P. Galois coverings of weakly shod algebras. Comm. Algebra 38 (2010), 1291--1318.






\bibitem{RINGEL} Ringel, M. Tame algebras and integral quadratic forms, Lecture Notes in Mathematics, 1099. Springer-Verlag, Berlin, 1984.

\bibitem{SCHIFFLER} Schiffler, R. Quiver representations. CMS Books in Mathematics/Ouvrages de Math\'{e}matiques de la SMC. Springer, Cham, 2014.



\bibitem{SOLBERG} Solberg, \O. Going relative with Maurice - a survey. Surveys in representation theory of algebras, 155--172, Contemp. Math., 716, Amer. Math. Soc., Providence, RI, 2018.

\bibitem{STRAMETZ} Strametz, C. The Lie algebra structure on the first Hochschild cohomology group of a monomial algebra. J. Algebra Appl. 5 (2006), 245--270.

\bibitem{TAILLEFER} Taillefer, R. First Hochschild cohomology group and stable equivalence classification of Morita type of some tame symmetric algebras. Homology Homotopy Appl. 21 (2019), 19--48.






\end{thebibliography}
\end{document}